\documentclass[10pt]{amsart} 
\usepackage{amsfonts,amsmath,amssymb,verbatim,amsthm,amscd}

\newtheorem {thm}{Theorem}
\newtheorem {lem}[thm]{Lemma}
\newtheorem {cor}[thm]{Corollary}

\newcommand{\mit}{}

\newcommand{\C}{\mathbb C}

\newcommand{\Z}{\mathbb Z}

\newcommand{\T}{\mathbb T}

\newcommand{\be}{\begin{equation}}
\newcommand{\ee}{\end{equation}}
\newcommand{\ba}{\begin{eqnarray}}
\newcommand{\ea}{\end{eqnarray}}

\newcommand{\bb}{}

\newcommand{\eg}{\Gamma_{\! p,q}}

\begin{document}

\title[Elliptic hypergeometric integrals]
{\bf Determinants of elliptic\\  hypergeometric integrals}

\author{E. M. Rains}
\address{Department of Mathematics, Caltech, Pasadena, CA 91125, USA}

\author{V. P. Spiridonov}
\address{Bogoliubov Laboratory of Theoretical Physics,
JINR, Dubna, Moscow Region 141980, Russia}

\thanks{To be published in the Russian journal {\em Funct. Analysis and its Appl.}}

\begin{abstract}
We start from an interpretation of the $BC_2$-symmetric ``Type I''
(elliptic Dixon) elliptic hypergeometric integral evaluation as a formula
for a Casoratian of the elliptic hypergeometric equation, and give an
extension to higher-dimensional integrals and higher-order hypergeometric
functions.  This allows us to prove the corresponding elliptic beta
integral and transformation formula in a new way, by proving both sides
satisfy the same difference equations, and that the difference equations
satisfy a Galois-theoretical condition ensuring uniqueness of
simultaneous solution.
\end{abstract}

\maketitle

\section{Introduction}
Plain hypergeometric functions and their $q$-analogues
are widely used in mathematics and mathematical physics.
They can be defined either as infinite series or contour integrals,
more or less on an equal footing \cite{aar}.
As to the recently discovered elliptic hypergeometric functions,
the situation with them is different---their general instances are
defined only via integral representations. The general concept of elliptic
hypergeometric integrals was introduced in \cite{spi:theta}.
Elliptic beta integrals \cite{die-spi:selberg,
rai:trans,spi:beta,spi:theta,spi-war:inversions} are the
simplest representatives of integrals of such type.
In the univariate setting there is only one elliptic beta integral
\cite{spi:beta}, presently the top level known
generalization of the Euler beta integral.
In the multivariable case such integrals are grouped in three classes.

The $n$-dimensional type I elliptic beta integrals contain $2n+3$ free
parameters, and there are two known general methods of proving them
\cite{rai:trans,spi:short}. The type II integrals have a smaller number of
parameters, and admit a straightforward derivation from the type I integrals
\cite{die-spi:selberg,spi:theta}. Both types of these exactly
computable integrals admit higher-order extensions with more parameters,
with associated transformation laws \cite{rai:trans}. For the $BC_n$ root system,
these are elliptic analogues of an integral due to Dixon \cite{Dixon} (type I)
and of Selberg's famous integral \cite{aar} (type II, with $5+1$ parameters),
respectively.
Multiple elliptic beta integrals of the third class \cite{spi:theta}
can be represented as determinants of univariate integrals, which, in turn,
reduce to computable theta function determinants.

In the present paper, we follow up on the observation implicit in
\cite{rai:trans} that the elliptic Dixon integrals can be expressed as
determinants of univariate elliptic hypergeometric integrals, higher-order
analogues of the elliptic beta integral.  This allows us to give a new
proof of the corresponding evaluation formula and of the related
transformation formula established first in \cite{rai:trans}. The
evaluation result lifts Varchenko's determinant of univariate plain
hypergeometric integrals \cite{var1} and the Aomoto-Ito determinant
\cite{ai3} to the elliptic level and enriches the list of known computable
determinants compiled in \cite{kra:advanced}.

We use the following notation. The key infinite product is
$$
(z;p)_\infty :=\prod_{k=0}^\infty(1-zp^k),
$$
where $|p|<1$ and $z\in\C$. The elliptic theta function has the form:
$$
\theta_p(z):=(z;p)_\infty (z^{-1};p)_\infty,
$$
where $z\in\C^*$. It obeys the properties
$$
\theta_p(pz)=\theta_p(z^{-1})=-z^{-1}\theta_p(z)
$$
and $\theta_p(z)=0$ for $z=p^\Z$. We follow the standard useful convention that
$$
\theta_p(a_1,\ldots,a_m):=\prod_{k=1}^m\theta_p(a_k),
\qquad
\theta_p(tz^{\pm1}):=\theta_p(tz)\theta_p(tz^{-1}),
$$
and say that a meromorphic function $f(z)$ is
$p$-elliptic if $f(pz)=f(z)$. The simplest nonconstant $p$-elliptic function
(of the second order) has the form $\theta_p(az,bz)/
\theta_p(cz,dz)$, where $ab=cd$.

The standard elliptic gamma function, depending on two complex bases
$p$ and $q$ lying in the unit disc, $|p|,|q|<1$, has the form:
$$
\eg(z)=\prod_{j,k=0}^\infty\frac{1-z^{-1}p^{j+1}q^{k+1}}{1-zp^jq^k},
$$
where $z\in\C^*$. It obeys the properties $\eg(z)=\Gamma_{\! q,p}(z)$,
$$
\eg(qz)=\theta_p(z)\eg(z),\quad \eg(pz)=\theta_q(z)\eg(z),
$$
and has zeros at $z=p^{\Z_{>0}}q^{\Z_{>0}} $ and poles at
$z=p^{\Z_{\leq0}}q^{\Z_{\leq0}}$. The reflection formula has the form
$\eg(a)\eg(b)=1, \, ab=pq$; we set also
\begin{eqnarray*}
&& \eg(a_1,\ldots,a_m):=\prod_{k=1}^m\eg(a_k),\qquad
\eg(tz^{\pm1}):=\eg(tz)\eg(tz^{-1}),\quad
\\ &&
\eg(tz_1^{\pm1}z_2^{\pm1})=\eg(tz_1z_2)\eg(tz_1z_2^{-1})
\eg(tz_1^{-1}z_2)\eg(tz_1^{-1}z_2^{-1}).
\end{eqnarray*}

\section{The elliptic hypergeometric equation}

The following elliptic analogue of the Gauss hypergeometric function
was introduced in \cite{spi:theta,spi:thesis}
\begin{equation}
V(\underline{t};p,q)=\kappa\int_\T\frac{\prod_{j=1}^8\eg(t_jz^{\pm 1})}
{\eg(z^{\pm2})}\frac{dz}{2\pi\sqrt{-1}z},
\label{ehf}\end{equation}
where $\kappa=(p;p)_\infty(q;q)_\infty/2$ and $\T$ is the positively oriented
unit circle. The parameters $t_j$ are restricted by the balancing condition
$\prod_{j=1}^8t_j=(pq)^2$ and the inequalities $|t_j|<1,\, j=1,\ldots,8$.
The $V$-function can be meromorphically continued to all $t_j\in\C^*$
preserving the balancing condition. For $t_7t_8=pq$ (and other similar
restrictions), it reduces to the elliptic beta integral \cite{spi:beta}
\begin{equation}
\kappa\int_\T\frac{\prod_{j=1}^6\eg(t_jz^{\pm 1})}
{\eg(z^{\pm2})}\frac{dz}{2\pi\sqrt{-1}z}=\prod_{1\leq j<k\leq 6}\eg(t_jt_k).
\label{e-beta}\end{equation}

The addition formula for elliptic theta functions written in the form
$$
t_3\theta_p(t_2t_3^{\pm1},t_1z^{\pm1})+t_1\theta_p(t_3t_1^{\pm1},t_2z^{\pm1})
+t_2\theta_p(t_1t_2^{\pm1},t_3z^{\pm1})=0
$$
yields the following contiguity relation, via a corresponding relation for
the integrands:
\begin{equation}
\frac{t_1V(qt_1)}{\theta_p(t_1t_2^{\pm1},t_1t_3^{\pm 1})}
+\frac{t_2V(qt_2)}{\theta_p(t_2t_1^{\pm1},t_2t_3^{\pm 1})}
+\frac{t_3V(qt_3)}{\theta_p(t_3t_1^{\pm1},t_3t_2^{\pm 1})} = 0,
\label{cont-1}\end{equation}
where $V(qt_j)$ denotes the $V(\underline{t};p,q)$-function
with the parameter $t_j$ replaced by $qt_j$ (with the balancing condition
being $\prod_{j=1}^8t_j=p^2q$).
Bailey-type symmetry transformations for the $V$-function \cite{rai:trans,spi:theta}
give to \eqref{cont-1} different forms, including in particular:
\begin{equation}
\frac{\prod_{j=4}^8\theta_p\left(t_1t_j/q\right)V(t_1/q)}
{t_1\theta_p(t_2/t_1,t_3/t_1)}
+\frac{\prod_{j=4}^8\theta_p\left(t_2t_j/q\right)V(t_2/q)}
{t_2\theta_p(t_1/t_2,t_3/t_2)}
+\frac{\prod_{j=4}^8\theta_p\left(t_3t_j/q\right)V(t_3/q)}
{t_3\theta_p(t_1/t_3,t_2/t_3)} = 0,
\label{cont-3}\end{equation}
where $\prod_{j=1}^8t_j=p^2q^3$.  In combination with \eqref{cont-1}, this
yields the elliptic hypergeometric equation \cite{spi:thesis}:
\begin{eqnarray}\label{eheq}
&& \makebox[-2em]{}
\mathcal{A}(t_1,t_2,\ldots,t_8,q;p)\Big(U(qt_1,q^{-1}t_2;q,p)-U(\underline{t};q,p)\Big)
\\ &&
+\mathcal{A}(t_2,t_1,\ldots,t_8,q;p)\Big(U(q^{-1}t_1,qt_2,;q,p)-U(\underline{t};q,p)\Big)
+ U(\underline{t};q,p)=0,
\nonumber\end{eqnarray}
where we have denoted
\begin{equation}
 \mathcal{A}(t_1,\ldots, t_8,q;p):=\frac{\theta_p(t_1/qt_3,t_3t_1,t_3/t_1)}
                 {\theta_p(t_1/t_2,t_2/qt_1,t_1t_2/q)}
\prod_{k=4}^8\frac{\theta_p(t_2t_k/q)}{\theta_p(t_3t_k)}
\end{equation}
and
$$
U(\underline{t}; q,p):=\frac{V(\underline{t};q,p)}
{\prod_{k=1}^2\mit\eg(t_kt_3^{\pm 1})}.
$$
The potential $\mathcal{A}(t_1,\ldots, t_8,q;p)$
is a $p$-elliptic function of parameters $t_1,\ldots,t_8$, one of which should be
counted as a dependent variable through the balancing condition.

We set $t_1=(pq)^2/t_2\cdots t_8$ and perform the shift $t_2\to pt_2$
(so that $t_1\to t_1/p$). Since the function $\mathcal{A}$ is $p$-elliptic in
all parameters, we have $\mathcal{A}(p^{-1}t_1,pt_2,\ldots)=
\mathcal{A}(t_1,t_2,\ldots)$. The function $U(p^{-1}t_1,pt_2)$ defines
therefore an independent solution of the elliptic hypergeometric equation.
Let us compute the Casoratian of these two solutions (i.e., a discrete
version of the Wronskian). For this, we multiply the above equation by
$U(p^{-1}t_1,pt_2)$, the equation
\ba\nonumber
&& \mathcal{A}(t_1,t_2,\ldots t_8,q;p)\Big(U(p^{-1}qt_1,pq^{-1}t_2)-U(p^{-1}t_1,pt_2)\Big)
\\ && \makebox[2em]{}
+\mathcal{A}(t_2,t_1,t_3,\ldots,q;p)\Big(U(p^{-1}q^{-1}t_1,pqt_2)-U(p^{-1}t_1,pt_2)\Big)
+ U(p^{-1}t_1,pt_2)=0
\nonumber\ea
by $U(t_1,t_2)$, subtract them and obtain
\begin{equation}\label{cas-eqn}
\mathcal{A}(t_1,t_2,\ldots t_8,q;p){D}(p^{-1}t_1,q^{-1}t_2)
=\mathcal{A}(t_2,t_1,t_3,\ldots,q;p){D}(p^{-1}q^{-1}t_1,t_2),
\end{equation}
where
$$
{D}(t_1,t_2)=U(qpt_1,t_2)U(t_1,pqt_2)-U(qt_1,pt_2)U(pt_1,qt_2)
$$
is the needed Casoratian. It is symmetric in $p$ and $q$,
which is an important property. The expression
$$
{D}(t_1,t_2)=\frac{V(pqt_1,t_2)V(t_1,pqt_2)-t_1^{-2}t_2^{-2}
\; V(qt_1,pt_2)V(pt_1,qt_2)}
{\prod_{k=1}^2 \eg(t_kt_3^{\pm 1},pqt_k t_3^{\pm 1})}
$$
can obviously be interpreted as the determinant of a particular $2\times 2$ matrix
whose elements are expressed via the $V$-function.

Since $t_1$ is a dependent variable, this is actually a first order
difference equation in $t_2$. After scaling $t_1\to pt_1,\;
t_2\to qt_2$ (so that $t_1=pq/\prod_{j=2}^8t_j$),
we obtain the following equation for $f(t_2):={D}(t_1,t_2)$:
\begin{eqnarray*}
&& f(qt_2)=\frac{\mathcal{A}(pt_1,qt_2,t_3,\ldots,q;p)}
{\mathcal{A}(qt_2,pt_1,t_3,\ldots,q;p)}f(t_2)
\\ && \makebox[2em]{}
=-\frac{t_1}{qt_2}\frac{\theta_p(t_1/q^2t_2,t_1/qt_3,t_1^{-1}t_3^{\pm 1})}
            {\theta_p(t_2/t_1,t_2/t_3, q^{-1}t_2^{-1}t_3^{\pm 1})}
\prod_{k=4}^8\frac{\theta_p(t_2t_k)}{\theta_p(t_1t_k/q)}f(t_2),
\end{eqnarray*}
which yields
$$
{D}(t_1,t_2)=C(t_2)\;\frac{\prod_{k=3}^8\eg(t_1t_k,t_2t_k) }
{\eg(t_1/t_2,t_2/t_1)}\prod_{k=1}^2\frac{\eg(t_k^{-1}t_3^{\pm 1})}
{\eg(t_kt_3^{\pm 1})},
$$
where $C(qt_2)=C(t_2)$. We can repeat the whole consideration with
permuted $p$ and $q$ and obtain $C(pt_2)=C(t_2)$. This means
(for incommensurate $p$ and $q$) that $C$ does not depend on $t_2$, but it
may depend on other parameters $t_3,\ldots,t_8$.
To compute $C$, we apply the residue calculus. For this we take the parameter
$t_3$ from inside the unit circle to its outside and impose the constraints
$|t_3|>1>|qt_3|,|pt_3|$. Then we deform the contour of integration $\T$ entering
the definition of $V(\underline{t})$ to the contour $\T_{def}$ deformed in such a
way that no poles are crossed during such a change of $t_3$. The Cauchy theorem
leads to
$$
V(\underline{t}):=V_{\T_{def}}(\underline{t})=V_\T(\underline{t})
+\frac{\prod_{j=1,\neq 3}^8\eg(t_jt_3^{\pm 1})}{\eg(t_3^{-2})}.
$$
We take then the limit $t_2\to 1/t_3$ and find the value of $C$ through the limit for
ratios of the left and right-hand sides of the above equality
$$
C=\lim_{t_2t_3\to 1}
\frac{V(pqt_1,t_2)V(t_1,pqt_2)-t_1^{-2}t_2^{-2}
\; V(qt_1,pt_2)V(pt_1,qt_2)}
{\prod_{k=3}^8\eg(t_1t_k,t_2t_k)} \eg(t_1/t_2,t_2/t_1).
$$
For $t_2\to1/t_3$, the function $V(t_1,pqt_2)$ reduces to the
elliptic beta integral, the residues of $V(pqt_1,t_2)$ blow up with
$V_\T(pqt_1,t_2)$ remaining finite, the residues of $V(pt_1,qt_2)$ and
$V(qt_1,pt_2)$ remain finite as well as the functions $V_\T(pt_1,qt_2)$ and
$V_\T(qt_1,pt_2)$. As a result, only the first term of our Casoratian
survives and yields
$$
C=\frac{\prod_{3\leq j<k\leq 8}\eg(t_jt_k)}{\eg(t_1^{-1}t_2^{-1})}.
$$
We obtain thus the formula
\begin{equation}
V(pqt_1,t_2)V(t_1,pqt_2)-t_1^{-2}t_2^{-2} V(qt_1,pt_2)V(pt_1,qt_2)
=\frac{\prod_{1\leq j<k\leq8}\eg(t_jt_k)}{\eg(t_1^{\pm1}t_2^{\pm1})}.
\label{V-det}\end{equation}

The described solution of the elliptic hypergeometric equation is defined for $|q|<1$,
although equation \eqref{eheq} itself
does not demand such a condition. It can be verified that
$$
\mathcal{A}\left(\frac{p^{1/2}}{t_1}, \ldots,\frac{p^{1/2}}{t_8},q;p\right)
=\mathcal{A}\left(t_1,\ldots,t_8,q^{-1};p\right).
$$
The transformation $t_j\to p^{1/2}/t_j$, $j=1,\ldots, 8,$
maps therefore the elliptic hypergeometric equation to itself with the base
change $q\to q^{-1}$. Equivalently, the same inversion $q\to 1/q$ occurs
after the transformation $t_j\to p^{a_j}/t_j$ with
integer $a_i$ such that $\sum_{j=1}^8a_j=4$.
As a result, we
obtain the following solution of the elliptic hypergeometric equation
in the regime $|q|>1$
\begin{eqnarray}
U(\underline{t};q,p)=\frac{V(p^{1/2}/t_1,\ldots,p^{1/2}/t_8;q^{-1},p)}
{\prod_{k=1}^2\Gamma_{p,q^{-1}}(p/t_kt_3,t_3/t_k) }.
\label{q>1}\end{eqnarray}
As to the unit circle case $|q|=1$, the corresponding solution of
equation \eqref{eheq} can
be obtained with the help of the modified elliptic gamma function
or the modular transformation \cite{spi:thesis}.

\section{A characterization theorem for the $V$-function}

We would like now to present contiguous relations for
the $V$-function (and, so, equation \eqref{eheq}) in a $2\times 2$
matrix form. For that we introduce the function
$$
W(t_1,\ldots,t_8;z):=\frac{V(t_1,\ldots,t_8;p,q)}{\prod_{j=1}^8\eg(t_jz^{\pm1})},
$$
where $z$ is some auxiliary variable. Replacing parameters $t_{1,2,3}$ by
$t_{4,7,8}$ and the $V$-function by $W$ in \eqref{cont-1}, we obtain
after shifting $t_3\to qt_3$
\begin{equation}
W(qt_3,qt_7)=\alpha(\underline{t};z)W(qt_3,qt_4)
+\beta(\underline{t};z)W(qt_3,qt_8),
\label{c-1}\end{equation}
where $W(qt_j,qt_k)$ means the $W(\underline{t};z)$-function with
respective parameters $t_j$ and $t_k$ replaced by $qt_j$ and $qt_k$, and
$$
\alpha(\underline{t};z)=\frac{\theta_p(t_4z^{\pm1},t_7t_8^{\pm1})}
{\theta_p(t_7z^{\pm1},t_4t_8^{\pm1})},
\qquad
\beta(\underline{t};z)=\frac{\theta_p(t_8z^{\pm1},t_7t_4^{\pm1})}
{\theta_p(t_7z^{\pm1},t_8t_4^{\pm1})}
$$
are $p$-elliptic functions of all variables (including $z$).

Replacing now $t_{1,2,3}$ by $qt_{1,2,3}$ in  \eqref{cont-3},
and then permuting $t_1$ and $t_2$ with $t_4$ and $t_8$, we obtain
\begin{equation}
W(qt_3,qt_4)=\gamma(\underline{t};z)W(qt_3,qt_8)+\delta(\underline{t};z)W(qt_4,qt_8),
\label{c-2}\end{equation}
where
\begin{eqnarray}\nonumber
&& \gamma(\underline{t};z)=\frac{\theta_p(t_8z^{\pm1},t_3t_8^{-1})}
{\theta_p(t_4z^{\pm1},t_3t_4^{-1})}\prod_{j=1,2,5,6,7}
\frac{\theta_p(t_4t_j)}{\theta_p(t_8t_j)},\qquad
\\ &&
\delta(\underline{t};z)=\frac{\theta_p(t_8z^{\pm1},t_4t_8^{-1})}
{\theta_p(t_3z^{\pm1},t_4t_3^{-1})}\prod_{j=1,2,5,6,7}
\frac{\theta_p(t_3t_j)}{\theta_p(t_8t_j)}
\label{c-2-coeff}\end{eqnarray}
are, again, $p$-elliptic functions of the parameters.
Eliminating $W(qt_3,qt_4)$ from \eqref{c-1} and \eqref{c-2}, we obtain
the relation
\begin{equation}
W(qt_3,qt_7)=(\alpha(\underline{t};z)\gamma(\underline{t};z)
+\beta(\underline{t};z))W(qt_3,qt_8)+\alpha(\underline{t};z)
\delta(\underline{t};z) W(qt_4,qt_8).
\label{c-3}\end{equation}
We define now the matrices
\begin{eqnarray}\label{M-mat}
&& M(t_1,t_2;t_3,t_4;t_5,t_6,t_7,t_8):=\left(\begin{array}{cc}
W(pt_1,qt_3) & W(pt_2,qt_3) \cr
W(pt_1,qt_4) & W(pt_2,qt_4) \cr
\end{array}\right), \quad
\\ &&
A(t_1,t_2;t_3,t_4;t_5,t_6,t_7,t_8):=\left(\begin{array}{cc}
A_{11} & A_{12} \cr
A_{21}& A_{22} \cr
\end{array}\right),
\label{A-mat}\end{eqnarray}
where
\begin{eqnarray*}
&& A_{11}(t_1,t_2;t_3,t_4;t_5,\ldots,t_8;z;p,q)
=\alpha(pt_1,q^{-1}t_8)\gamma(pt_1,q^{-1}t_8)+\beta(pt_1,q^{-1}t_8),
\\
&& A_{12}(t_1,t_2;t_3,t_4;t_5,\ldots,t_8;z;p,q)
=\alpha(pt_1,q^{-1}t_8)\delta(pt_1,q^{-1}t_8),
\\
&& A_{21}(t_1,t_2;t_3,t_4;t_5,\ldots,t_8;z;p,q)
=A_{12}(t_1,t_2;t_4,t_3;t_5,\ldots,t_8;z;p,q),
\\
&& A_{22}(t_1,t_2;t_3,t_4;t_5,\ldots,t_8;z;p,q)
=A_{11}(t_1,t_2;t_4,t_3;t_5,\ldots,t_8;z;p,q)
\end{eqnarray*}
are $p$-elliptic functions. In particular,
$$
A_{ij}(p^{-1}t_1,pt_2;t_3,t_4;t_5,\ldots,t_8;z;p,q)
=A_{ij}(t_1,t_2;t_3,t_4;t_5,\ldots,t_8;z;p,q).
$$
After replacements $t_7\to t_7x,\, t_8\to t_8 x^{-1}$,
equations \eqref{c-3} and its partner obtained after permuting
$t_3$ with $t_4$ are rewritten
as a linear first order matrix $q$-difference equation
\begin{equation}
M(qx)=A(x)\, M(x),
\label{A-eq}\end{equation}
where we indicate only $x$-dependence.

After the permutations $p\leftrightarrow q, t_1\leftrightarrow t_3,
t_2\leftrightarrow t_4$, the matrix $M$ is transformed to its
transpose $M^T$. Equation \eqref{A-eq} gets therefore transformed to
\begin{equation}
M(px)=M(x)\,B(x),
\label{B-eq}\end{equation}
where
\begin{equation}
B(x):=B(t_1,t_2;t_3,t_4;t_5,t_6,t_7x,t_8x^{-1}):=\left(\begin{array}{cc}
B_{11} & B_{12} \cr
B_{21} & B_{22} \cr
\end{array}\right),
\label{B-mat}\end{equation}
\begin{eqnarray*}
&& B_{11}=A_{11}(t_3,t_4;t_1,t_2;t_5,t_6,t_7x,t_8x^{-1};z;q,p),
\\
&& B_{12}=A_{21}(t_3,t_4;t_1,t_2;t_5,t_6,t_7x,t_8x^{-1};z;q,p),
\\
&& B_{21}=A_{12}(t_3,t_4;t_1,t_2;t_5,t_6,t_7x,t_8x^{-1};z;q,p),
\\
&& B_{22}=A_{22}(t_3,t_4;t_1,t_2;t_5,t_6,t_7x,t_8x^{-1};z;q,p).
\end{eqnarray*}

In principle, from the existence of first equation \eqref{A-eq}, it follows
that there exists {\em some} $p$-difference equation of the form
\eqref{B-eq} with $q$-elliptic coefficients  \cite{Etingof}.
Indeed, we can simply take $B(x):=M(x)^{-1} M(px) $ and see that
\begin{eqnarray*} &&
B(qx) = M(x)^{-1}A(x)^{-1}A(px)M(px) = M(x)^{-1}M(px) = B(x).
\end{eqnarray*}
However, this $B$-matrix is not unique. We let $g(x)$ denote
a matrix satisfying $g(qx)=g(x).$
Equation \eqref{A-eq} does not change after the replacement $M\to Mg$,
but the $B$-matrix gets changed to $ B\to g(x)^{-1}B\, g(px) $
showing a functional freedom in the definition of this matrix.

We suppose now, that $M'$ is another meromorphic solution of
equations \eqref{A-eq} and \eqref{B-eq}. The matrix $N = M'M^{-1} $
satisfies then the difference equations
$$
N(px) = N(x), \quad
N(qx) = A(x) N(x) A(x)^{-1}.
$$
The first equation states that $N$ has $p$-elliptic entries, and the second
one is a $q$-difference equation with the $p$-elliptic coefficients.
The normalization chosen above works for $q$- and $p$-shifts of all parameters,
not just those of $t_7$ and $t_8$, because of the permutational symmetry.

\begin{thm}\label{V-char}
For incommensurate $p$ and $q$, $N$ is a constant multiple of the identity.
\end{thm}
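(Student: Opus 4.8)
The plan is to exploit the interplay of the two difference equations $N(px)=N(x)$ and $N(qx)=A(x)N(x)A(x)^{-1}$ together with the holomorphy/meromorphy structure of $N$ and the incommensurability of $p$ and $q$. First I would record what the two equations say in isolation: the $p$-equation says each entry $N_{ij}(x)$ is a $p$-elliptic meromorphic function of $x$, hence (after clearing the finitely many poles in a fundamental annulus) a ratio of theta functions of bounded degree; in particular each $N_{ij}$ has only finitely many zeros and poles in $\C^*/p^\Z$. The second equation then constrains how these zeros and poles may sit: applying it iteratively, the divisor of $N_{ij}$ must be invariant (up to the motion coming from conjugation by $A$) under multiplication by $q$. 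Since $q$ generates a dense subgroup of $\C^*/p^\Z$ when $p,q$ are incommensurate, a finite $q$-quasi-invariant divisor on the elliptic curve is forced to be empty — i.e. each $N_{ij}$ is in fact a holomorphic, nonvanishing $p$-elliptic function, hence a constant $c_{ij}$.

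With $N$ now a constant matrix, the second equation collapses to the purely algebraic condition $N A(x) = A(x) N$ for all $x$ (equivalently on the dense set $q^\Z$, hence identically by meromorphy). So the remaining task is to show that a constant matrix commuting with $A(x)$ for all values of the auxiliary parameter $x$ must be scalar. This is where the explicit form of $A$ matters: I would substitute the expressions for $A_{11},A_{12},A_{21},A_{22}$ in terms of $\alpha,\beta,\gamma,\delta$ and check that $A(x)$ is not simultaneously diagonalizable by an $x$-independent change of basis — concretely, that the off-diagonal entries $A_{12}(x)$ and $A_{21}(x)$ are not identically zero and that $A(x_1)$ and $A(x_2)$ fail to commute with each other for generic $x_1\neq x_2$. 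Any matrix $N$ commuting with a family containing two non-commuting members lies in the center of the algebra they generate, which for $2\times 2$ matrices is just the scalars; this gives $N=cI$.

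The same argument applies with $p$ and $q$ interchanged (using the $B$-equation \eqref{B-eq} in place of the $A$-equation), which is the reason the hypotheses include both; but one direction already suffices here since once $N$ is constant the $A$-equation alone pins it down.

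The step I expect to be the main obstacle is the second one: verifying the non-commutativity/irreducibility of the family $\{A(x)\}$. The first step — ruling out a nontrivial quasi-invariant divisor — is a clean application of the dense-orbit principle and should go through routinely. But showing $A(x)$ is genuinely a family of non-simultaneously-diagonalizable matrices requires unwinding the theta-function expressions for $\alpha,\beta,\gamma,\delta$ and exhibiting, say, a specialization of the parameters $t_5,\dots,t_8,z$ at which $A_{12}\not\equiv 0$ and the commutator $[A(x_1),A(x_2)]\not\equiv 0$; degenerate loci (where, e.g., $\delta\equiv 0$, making $A$ triangular) must be checked not to persist for all parameter values. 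One should be a little careful that the argument producing $N$ in the first place — i.e. that a second solution $M'$ exists and $M'M^{-1}$ is meromorphic — does not secretly restrict to such a degenerate locus; but that is guaranteed by the generic nondegeneracy of $M$ itself, which is what makes \eqref{A-eq} and \eqref{B-eq} a well-posed pair.
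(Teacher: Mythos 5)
The decisive gap is your first step. The relation $N(qx)=A(x)N(x)A(x)^{-1}$ does \emph{not} make the divisor of an entry $N_{ij}$ quasi-invariant under multiplication by $q$ in any sense that the dense-orbit principle can exploit: $A(x)$ is itself built from theta functions and has its own zeros and poles, so pulling the equation back by $x\mapsto qx$ shifts the divisor of $N$ by (part of) the divisor of $A$, and iterating only accumulates further such contributions; nothing forces the divisor of $N$ to be empty. Indeed, for a general $p$-elliptic coefficient matrix the conclusion of your step 1 is false: if $A(x)=g(qx)D(x)g(x)^{-1}$ with $D$ diagonal and $g$ a nonconstant $p$-elliptic matrix, then $N(x)=g(x)N_0g(x)^{-1}$ is a nonconstant $p$-elliptic solution for suitable constant $N_0$; even in the diagonal case, if $A_{11}/A_{22}=f(qx)/f(x)$ for some nonconstant $p$-elliptic $f$, then the nilpotent matrix with $N_{12}=f(x)$ solves both equations. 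So any correct proof must use specific properties of the particular $A$ at hand, and your step 1 uses none. This also undercuts your step 2 criterion: in the gauge-transformed example the family $\{A(x)\}$ can generate the full matrix algebra (no common invariant subspace, noncommuting members) while nonconstant solutions $N$ still exist. The relevant irreducibility is that of the difference Galois group, not of the set of coefficient matrices.

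The paper's route avoids the two-stage reduction altogether. By a Picard--Vessiot argument (the Lemma and Corollary of the last section), the space of $p$-elliptic matrix solutions of $\tau(N)=ANA^{-1}$ has dimension $\dim {\it End}_G(V)$, where $G$ is the difference Galois group of $M(qx)=A(x)M(x)$ over the field of $p$-elliptic functions; if $G$ acts irreducibly this dimension is $1$, and since the identity is a solution, every solution is scalar --- constancy and scalarity come out simultaneously. Irreducibility of $G$ is then proved not by inspecting $A(x)$ directly but via Andr\'e's specialization theorem (the Galois group can only shrink under degeneration) together with the fact that the degenerate Jordan--Pochhammer differential equation has Galois group containing $SL_{m+1}$. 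To repair your argument you would need to replace step 1 by something of at least this strength; the constancy of $N$ is the hard content of the theorem, not a routine preliminary.
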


The proof of this Theorem is given in a more general context in the last
section.  This statement simply means that the
non-trivial $p$-elliptic
functions (the matrix elements of $N$) cannot satisfy this $q$-difference
equation with $p$-elliptic coefficients.

With the help of three pairwise incommensurate quasiperiods $\omega_{1,2,3}$
and the parametrization
\begin{eqnarray*}
&& q= e^{2\pi i\frac{\omega_1}{\omega_2}}, \quad
p=e^{2\pi i\frac{\omega_3}{\omega_2}},
\end{eqnarray*}
we can convert $q$- and $p$-difference equations into linear finite
difference equations. For this it is sufficient to pass to parameters $g_j$
introduced as $t_j=e^{2\pi ig_j/\omega_2},\; j=1,\ldots,8,$ and $x=e^{2\pi
  iu/\omega_2}.$ As a function of $u$, the $V$-function is thus
characterized as a unique (up to a constant independent of $u$) solution of
5 linear fininite difference equations: two difference equation in
\eqref{A-eq} working with the shifts $u\to u+\omega_1$, their partners in
\eqref{B-eq} working with the shifts $u\to u+\omega_3$, and the condition
of periodicity under the shifts $u\to u+\omega_2$, equivalent to the
analiticity condition in $x\in\C^*$. In order to characterize the proper function
$V(\underline{t};q,p)$ as a function of $t_1,\ldots,t_8$ up to a constant,
we simply need to adjoin the permutational symmetry group $S_8$.

\section{Determinant representation of elliptic Dixon integrals}

The elliptic Dixon integrals (a.k.a. Type I integrals with $BC_n$ symmetry)
have the form:
$$
I_n^{(m)}(t_1,\ldots,t_{2n+2m+4})=\kappa_n\int_{\T^n}
\prod_{1\leq i<j\leq n} \frac{1}{\eg(z_i^{\pm 1}z_j^{\pm 1})}
\prod_{j=1}^n\frac{\prod_{i=1}^{2n+2m+4}\eg(t_iz_j^{\pm 1})}
{\eg(z_j^{\pm 2})}\frac{dz_j}{2\pi\sqrt{-1}z_j},
$$
where $|t_j|<1$,
$$
\prod_{j=1}^{2n+2m+4}t_j=(pq)^{m+1},\qquad
\kappa_n=\frac{(p;p)_\infty^n(q;q)_\infty^n}{2^n n!}.
$$
By convention, when $n=0$, $I^{(m)}_0:=1$; when $n=1$, the resulting
univariate integral is a higher-order version of the elliptic beta
integral.

The following transformation identity has been proved in \cite{rai:trans}.
\begin{thm}\label{tr-thm}
The integrals $I^{(m)}_n$ satisfy the relation
\begin{equation}
I_n^{(m)}(t_1,\ldots,t_{2n+2m+4})=\prod_{1\leq r<s\leq 2n+2m+4}\eg(t_rt_s)\;
I_m^{(n)}\left(\frac{\sqrt{pq}}{t_1},\ldots,\frac{\sqrt{pq}}{t_{2n+2m+4}}\right).
\label{trafo}\end{equation}
\end{thm}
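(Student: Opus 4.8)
The plan is to lift the whole identity to the one-variable theory of Sections~2--3. I would first write each elliptic Dixon integral as a determinant of univariate higher-order elliptic hypergeometric integrals, and then show that, after an auxiliary variable is introduced, the two sides of \eqref{trafo} are simultaneous solutions of one and the same overdetermined system of $q$- and $p$-difference equations together with an analyticity requirement. The general characterization theorem of the last section --- of which Theorem~\ref{V-char} is the $BC_2$ instance --- then forces the two sides to be proportional, and a degeneration pins down the proportionality constant.

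The first step is the determinant representation. I would show that
\[
I_n^{(m)}(t_1,\ldots,t_{2n+2m+4})=\Phi_n(\underline{t})\,\det_{1\le i,j\le n}\bigl(V^{(n+m-1)}_{ij}(\underline{t})\bigr),
\]
where $V^{(k)}:=I_1^{(k)}$ is the $(2k+6)$-parameter univariate elliptic hypergeometric integral (the higher-order analogue of \eqref{e-beta}, with $V^{(1)}=V$), each entry $V^{(n+m-1)}_{ij}$ is one such integral with a fixed block of its $2n+2m+4$ parameters rescaled by monomials in $p$ and $q$, and $\Phi_n$ is an explicit product of elliptic gamma functions. The mechanism is the elliptic lift of Varchenko's and the Aomoto--Ito determinants: multiplying the cross-factor $\prod_{i<j}1/\eg(z_i^{\pm1}z_j^{\pm1})$ by an appropriate product of single-variable weights and antisymmetrizing through a Frobenius/Cauchy-type theta-function determinant identity turns the integrand of $I_n^{(m)}$ into $\det(\varphi_i(z_j))$ for suitable single-variable integrands $\varphi_i$; integrating over $\T^n$ term by term and using the $n!$-fold symmetry collapses the integral into a determinant of univariate integrals. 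For $n=2$, $m=0$ this is exactly the Casoratian identity \eqref{V-det}, and, in general, it exhibits $I_n^{(m)}(\underline{t})$ and $I_m^{(n)}(\sqrt{pq}/\underline{t})$ as built from the same family of univariate integrals --- the structural reason \eqref{trafo} should hold.

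Next I would set up the difference equations. The functions $V^{(k)}$ obey three-term contiguous relations generalizing \eqref{cont-1} and \eqref{cont-3}; exactly as in Section~3, after splitting a parameter pair via $t_7\to t_7x$, $t_8\to t_8x^{-1}$, these assemble into a first-order matrix $q$-difference equation $M(qx)=A(x)M(x)$ with $p$-elliptic coefficients and a companion $M(px)=M(x)B(x)$ with $q$-elliptic coefficients, $M$ now of the size prescribed by the determinant representation. Taking determinants, $I_n^{(m)}(\underline{t})$ becomes, as a function of $x$, a solution of explicit scalar $q$- and $p$-difference equations with theta-ratio coefficients that is single-valued in $x\in\C^*$; by the $S_{2n+2m+4}$-symmetry of $I_n^{(m)}$ the same holds for every such splitting. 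The right-hand side $R(\underline{t}):=\prod_{1\le r<s\le 2n+2m+4}\eg(t_rt_s)\,I_m^{(n)}(\sqrt{pq}/t_1,\ldots,\sqrt{pq}/t_{2n+2m+4})$ satisfies the same system: the same analysis with $n$ and $m$ interchanged governs $I_m^{(n)}$, the substitution $t_j\mapsto\sqrt{pq}/t_j$ intertwines the $m$-indexed equations with the $n$-indexed ones (the analogue of $t_j\mapsto p^{1/2}/t_j$ producing \eqref{q>1}), and the prefactor $\prod_{r<s}\eg(t_rt_s)$ supplies the compensating quasiperiodicity under $t_j\to qt_j$, $t_j\to pt_j$, verified from $\eg(qz)=\theta_p(z)\eg(z)$, $\eg(pz)=\theta_q(z)\eg(z)$. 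The characterization theorem then applies: for incommensurate $p$ and $q$ a $p$-elliptic solution of the relevant $q$-difference equation with $p$-elliptic coefficients is constant, so $I_n^{(m)}(\underline{t})=c(p,q)\,R(\underline{t})$ with $c$ depending only on $p,q$. Finally $c=1$ follows by degeneration: letting $t_{2n+2m+3}t_{2n+2m+4}\to pq$ reduces the left side, by the residue calculus that led to \eqref{V-det}, to $I_n^{(m-1)}$ and the right side consistently, giving an induction on $n+m$ whose base case is the univariate elliptic beta integral \eqref{e-beta} (equivalently, the $m=0$ evaluation, which is the constant already computed in \eqref{V-det}).

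The main obstacle is the first step carried out in full $(n,m)$-generality: finding the correct theta-function Frobenius/Cauchy identity and keeping track of the parameter rescalings so that the matrix entries really are higher-order $V$-functions and the matrix difference system of the previous paragraph genuinely closes. Once the determinant representation is available, the remainder is an orchestration of the Section~3 machinery together with the residue analysis already performed in the $BC_2$ case; the only other point demanding care is bookkeeping the many gamma-function prefactors and residue contributions when pinning down $c$.
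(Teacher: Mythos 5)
Your overall architecture matches the paper's: determinant representation of $I^{(m)}_n$ via a theta-Cauchy determinant and the Heine identity, a pair of recurrences (the $(n+2)$-term integrand recurrence and the $(m+2)$-term recurrence coming from a function that integrates to zero against the density) assembled into a matrix $q$-difference equation with $p$-elliptic coefficients and a $p$-difference partner, the observation that the substitution $t_j\mapsto\sqrt{pq}/t_j$ swaps the two kinds of recurrence so that both sides of \eqref{trafo} solve the same system, and a limiting argument for the constant. However, there is a genuine gap at the step you dispose of in one sentence: ``the characterization theorem then applies.'' That theorem is not an available black box --- the paper explicitly defers the proof of Theorem \ref{V-char} to this very argument (``As a subcase, this proves Theorem \ref{V-char} as well''), so invoking it here is circular unless you supply its proof. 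And the statement you actually use, that a $p$-elliptic solution of a $q$-difference equation with $p$-elliptic coefficients must be constant, is \emph{false} in general for matrix equations: if $M$ and $M'$ both solve $M(qx)=A(x)M(x)$ and $M(px)=M(x)B(x)$, then $N=M'M^{-1}$ is $p$-elliptic and satisfies the conjugation equation $N(qx)=A(x)N(x)A(x)^{-1}$, which always has the non-scalar solutions one expects when $A$ is, say, diagonal or block-triangular. Forcing $N$ to be scalar requires showing that the difference Galois group of the $\binom{n+m}{m}$-dimensional system is irreducible, which is the actual content of the paper's last section: a lemma identifying $k$-rational solutions of $\tau(N)=ANA^{-1}$ with ${\it End}_G(V)$, Schur's lemma, and then a proof of generic irreducibility by degenerating the $n=1$ equation to the Jordan--Pochhammer differential equation (whose Galois group contains $SL_{m+1}$ by Takano--Bannai) and appealing to Andr\'e's theorem that Galois groups can only shrink under specialization, together with the exterior-power structure for general $n$. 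None of this appears in your proposal, and your closing paragraph even identifies the determinant representation --- the comparatively routine part --- as the main obstacle, while the irreducibility of the Galois group is where the real work lies.

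Two smaller points. First, ``taking determinants'' of the matrix equation yields only a first-order scalar equation for $\det M$ (which is essentially the already-evaluated $I^{(0)}_{n+m}$) and does not characterize $I^{(m)}_n$; you must keep the full matrix of shifted integrals, as the paper does with its explicit $M(x)_{ij}$, and also verify that the coefficients become genuinely elliptic only after the renormalization by $\bigl(\eg(v^{\pm2})/\prod_r\eg(t_rv^{\pm1})\bigr)^n$. Second, your three-term contiguous relations for $V^{(k)}$ should really be the $(n+2)$- and $(m+2)$-term recurrences \eqref{rec-rel1} and \eqref{rec-rel2}; only for $n=m=1$ do they collapse to three terms. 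The proposed determination of the constant by sending $t_{2n+2m+3}t_{2n+2m+4}\to pq$ and inducting is reasonable and consistent with the paper's remark that the constant is obtained via a limit.
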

In particular, when $m=0$, the integral on the right-hand side is
0-dimensional, and one obtains an explicit evaluation of the left-hand side.

One of the objectives of the present work is to give a more elementary
proof of this transformation, by showing that both sides satisfy the same
family of difference equations and initial conditions.  We will also obtain
a new proof of the special case $m=0$.

The key idea is that $I^{(m)}_n$ can be written as a determinant of
integrals of the form $I^{(n+m-1)}_1$; we will thus be able to use
difference equations for the univariate integrals to deduce difference
equations for $I^{(m)}_n$.

Due to the reflection identity $\eg(z)\eg(pq/z)=1$,
we can write the $I_n^{(m)}$ integrand's cross factors as
$$
\prod_{1\leq i<j\leq n} \frac{1}{\eg(z_i^{\pm 1}z_j^{\pm 1})}
=\prod_{1\le i<j\le n} \left( z_i^{-1}\theta_p(z_i z_j^{\pm 1})\;
z_i^{-1}\theta_q(z_i z_j^{\pm 1})\right).
$$
The point, then is that the antisymmetric factor $\prod_{1\le i<j\le n}
z_i^{-1}\theta_p(z_iz_j^{\pm 1})$ can be written as a determinant; more
precisely, we have the following elliptic analogue of the Cauchy determinant:
$$
\det_{1\le i,j\le n} \left(\frac{1}{a_i^{-1}\theta_p(a_iz_j^{\pm 1})}\right)
=
\frac{
\prod_{1\le i<j\le n} a_i^{-1}\theta_p(a_i a_j^{\pm 1})
\prod_{1\le i<j\le n} z_i^{-1}\theta_p(z_i z_j^{\pm 1})}
{(-1)^{n(n-1)/2}\prod_{1\le i,j\le n} a_i^{-1}\theta_p(a_i z_j^{\pm 1})}
$$
(which, as observed in \cite{rai:rec}, can be obtained from the usual
Cauchy determinant via a suitable substitution), using which we can bring
our integral to the form
\begin{eqnarray*} && \makebox[-1em]{}
I_n^{(m)}(t_1,\ldots,t_{2n+2m+4})=\frac{\kappa_n}
{\prod_{1\leq i<j\leq n}a_i^{-1}\theta_p(a_ia_j^{\pm 1})b_i^{-1}\theta_q(b_ib_j^{\pm 1})}
\\ && \makebox[0em]{} \times
\int_{\T^n}\prod_{j=1}^n\left(
\frac{\prod_{r=1}^{2n+2m+4}\eg(t_rz_j^{\pm 1})}{\eg(z_j^{\pm2})}
\prod_{k=1}^n b_k^{-1}\theta_p(b_kz_j^{\pm 1})\; b_k^{-1} \theta_q(b_kz_j^{\pm 1})
\frac{dz_j}{2\pi\sqrt{-1}z_j}\right)
\\ && \makebox[8em]{} \times
\det_{1\le i,j\le n} \phi_i(z_j) \det_{1\le i,j\le n} \psi_i(z_j),
\end{eqnarray*}
where $\phi_i(z_j)=a_i/\theta_p(a_iz_j^{\pm 1})$,
$\psi_i(z_j)=b_i/\theta_q(b_iz_j^{\pm 1})$.
But then, using the Heine identity:
$$
\frac{1}{n!}
\int
\det_{1\le i,j\le n} \phi_i(z_j)
\det_{1\le i,j\le n} \psi_i(z_j)
\prod_{1\le i\le n} d\mu(z_i)
= \det_{1\le i,j\le n}\int \phi_i(z)\psi_j(z) d\mu(z),
$$
we can write
\begin{eqnarray*} &&\makebox[-1em]{}
I_n^{(m)}(t_1,\ldots,t_{2n+2m+4})= \prod_{1\leq i<j\leq n}
\frac{1}{a_j\theta_p(a_ia_j^{\pm 1})b_j\theta_q(b_ib_j^{\pm 1})}
\\ && \makebox[-1em]{} \times
\det_{1\le i,j\le n}\left(\kappa\int_\T \frac{\prod_{r=1}^{2n+2m+4}
\eg(t_r z^{\pm 1})}{\eg(z^{\pm2})}
\prod_{k\ne i} \theta_p(a_kz^{\pm 1})
\prod_{k\ne j} \theta_q(b_kz^{\pm 1})
\; \frac{dz}{2\pi\sqrt{-1}z} \right),
 \end{eqnarray*}
where $\kappa=(p;p)_\infty (q;q)_\infty/2$.  If we choose $a_i=t_i$,
$b_i=t_{n+i}$, $1\le i\le n$, then the entries of the above determinant are
of the form $I^{(m+n-1)}_1$.  To be precise, the $ij$ entry is
\[
T_q(t_i)^{-1} T_p(t_{n+j})^{-1}
I^{(m+n-1)}_1(qt_1,\dots,qt_n,pt_{n+1},\dots,pt_{2n},t_{2n+1},\dots,t_{2n+2m+4}),
\]
where $T_q(t_k)$ represents the $q$-shift operator,
$T_q(t_k)f(t_k)=f(qt_k)$. We can also let the sequences $a$, $b$ overlap,
at the cost of some slight extra complication. For instance,
for $n=2, m=0$ and $a_i=b_i=t_i,\; i=1,2,$ we obtain on the right-hand side
the Casoratian of $V$-functions \eqref{V-det}, which yields
$
I_2^{(0)}(t_1,\ldots,t_8)=\prod_{1\leq j<k\leq 8}\eg(t_jt_k).
$
This result hints that the integral $I_n^{(0)}$ is computable
in the closed form for arbitrary $n>2$.

Note that when $m=-1$, the balancing condition reads $t_1\cdots
t_{2n+4}=1$, and thus at least one of the entries of the determinant
cannot use the unit circle as its contour.  This is not, however, a serious
issue, since there always exists {\em some} valid choice of common contour,
and the Heine identity works regardless.

Now, the fact that the integral vanishes for $m=-1$ (a fact visible from
the explicit formula for $m=0$) implies that the rows of the above matrix
must be linearly dependent.  Conversely, if we can find a linear dependence
between integrals
\[
T_q(t_i)^{-1} T_p(t_{n+j})^{-1}
I^{(n-2)}_1(qt_1,\dots,qt_n,pt_{n+1},\dots,pt_{2n},t_{2n+1},t_{2n+2}),
\]
which is independent of $j$ (i.e., is $p$-elliptic in $t_{n+1},\dots,t_{2n}$),
that will imply vanishing for $m=-1$, which as shown in \cite{die-spi:selberg}
allows one to compute the integral for $m=0$.  This leads us to examine recurrence
relations for the univariate integrals.

In general, not only are the entries of the above matrix integrals
of the form $I^{(n+m-1)}_1$, but in fact the $k\times k$ minors
themselves are proportional to integrals of the form $I^{(n+m-k)}_k$.
The recurrence for univariate integrals implicit in the vanishing of
$I^{(-1)}_n$ gives rise to recurrences for higher $I^{(m)}_n$ in the
following way.

\begin{lem}\label{lem-kernel}
Let $M$ be a $n\times k$ matrix, and suppose the vector $v$ satisfies $vM=0$.
Then the $k\times k$ minors of $M$ satisfy the $(n-k+1)$-term relation
\[
\sum_{k\le i\le n} v_i \det_{l\in\{1,\dots,k-1,i\},l'\in\{1,\dots,k\}}(M_{ll'}) = 0.
\]
\end{lem}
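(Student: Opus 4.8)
The plan is to reduce the assertion to the single elementary fact that a determinant with two equal rows vanishes, reached by cofactor expansion along the ``moving'' row. Write $r_1,\dots,r_n$ for the rows of $M$, regarded as vectors in the $k$-dimensional ambient space, and for $i\ge k$ abbreviate $\Delta_i:=\det_{l\in\{1,\dots,k-1,i\},\,l'\in\{1,\dots,k\}}(M_{ll'})$; since $i>k-1$ the rows there already appear in increasing order, so $\Delta_i$ is exactly the coordinate of $r_1\wedge\cdots\wedge r_{k-1}\wedge r_i$ in $\Lambda^k$, and no sign conventions are at issue.

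First I would expand $\Delta_i$ along its last row, $\Delta_i=\sum_{l'=1}^k(-1)^{k+l'}M_{il'}C_{l'}$, where $C_{l'}$ is the minor of the top $(k-1)\times k$ block of $M$ with column $l'$ deleted; the point is that $C_{l'}$ does not depend on $i$. Inserting this into $\sum_{i=k}^n v_i\Delta_i$ and exchanging the two summations gives $\sum_{i=k}^n v_i\Delta_i=\sum_{l'=1}^k(-1)^{k+l'}C_{l'}\bigl(\sum_{i=k}^n v_iM_{il'}\bigr)$. Next I would invoke the hypothesis $vM=0$, which says $\sum_{i=1}^n v_iM_{il'}=0$ for each column index $l'$, to rewrite the partial sum as $\sum_{i=k}^n v_iM_{il'}=-\sum_{i=1}^{k-1}v_iM_{il'}$. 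Exchanging sums once more, $\sum_{i=k}^n v_i\Delta_i=-\sum_{i=1}^{k-1}v_i\bigl(\sum_{l'=1}^k(-1)^{k+l'}C_{l'}M_{il'}\bigr)$; and for each $i\le k-1$ the bracketed expression is the cofactor expansion along the last row of the $k\times k$ matrix with rows $r_1,\dots,r_{k-1},r_i$, hence $0$ because $r_i$ is repeated. In one line this is just $\sum_{i=k}^n v_i\Delta_i=r_1\wedge\cdots\wedge r_{k-1}\wedge\sum_{i=k}^n v_ir_i=-\,r_1\wedge\cdots\wedge r_{k-1}\wedge\sum_{i=1}^{k-1}v_ir_i=0$.

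There is no serious obstacle: the only items requiring a moment's care are the sign bookkeeping in the cofactor expansion (sidestepped entirely by the wedge-product phrasing) and the observation that the relation involves only the tail $v_k,\dots,v_n$ of $v$, the head $v_1,\dots,v_{k-1}$ having been consumed in converting $\sum_{i\ge k}$ into $-\sum_{i<k}$. The boundary cases are consistent with this: $k=1$ returns the hypothesis $vM=0$ verbatim, while $k=n$ gives $v_n\det M=0$, which the argument proves directly (and which is also clear since $vM=0$ forces $M$ singular unless $v_n=0$).
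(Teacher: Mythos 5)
Your proof is correct and is essentially the paper's own argument in expanded form: the paper extends the sum to all $i$ by multilinearity (the full combination $\sum_i v_i r_i$ of rows vanishes since $vM=0$) and notes the added terms $i<k$ die because of repeated rows, which is exactly your wedge-product one-liner read in the other direction. The cofactor-expansion details and the boundary-case checks are fine but not needed beyond that observation.
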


\begin{proof}
This certainly holds, by linearity, if we were to extend the sum
down to $i=1$, but the additional terms all have repeated rows
in the minors, thus vanish.
\end{proof}

There are two main sources of recurrences for hypergeometric integrals.  The
first is recurrences of the integrands themselves (so long as the contour
conditions can be satisfied by a common contour, that is).

\begin{thm}
The integral $I^{(m)}_n$ satisfies the $(n+2)$-term recurrence
\begin{equation}
\sum_{1\le i\le n+2}
\frac{t_i}{\prod_{j=1,\, j\ne i}^{n+2} \theta_p(t_i t_j^{\pm 1})}
T_q(t_i) I^{(m)}_n(t_1,\dots,t_{2n+2m+4})
=0,\quad \prod_{j=1}^{2n+2m+4}t_j=(pq)^mp.
\label{rec-rel1}\end{equation}
\end{thm}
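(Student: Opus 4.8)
The plan is to establish the recurrence \eqref{rec-rel1} at the level of the integrands, exploiting the balancing condition $\prod_{j=1}^{2n+2m+4}t_j=(pq)^mp$. First I would write down the addition formula for theta functions in its $(n+2)$-term form: for generic parameters $t_1,\dots,t_{n+2}$ and an auxiliary variable $z$,
\begin{equation}
\sum_{i=1}^{n+2} \frac{t_i\, \theta_p(t_i z^{\pm 1})\,\prod_{j=1}^n \theta_p(s_i s_j')}{\prod_{j=1,\,j\ne i}^{n+2}\theta_p(t_i t_j^{\pm 1})} = 0,
\nonumber
\end{equation}
which is the standard partial-fraction expansion of a $p$-elliptic function in $z$ with the prescribed zeros and poles (the key point is that the coefficient of $z$ in the associated elliptic function of degree matching $n+2$ poles vanishes, forcing the residue sum to be zero). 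The precise identity I need is the one obtained by expanding, as a function of $z$, a ratio of theta functions whose numerator absorbs the factor $\eg(qt_i z^{\pm 1})/\eg(t_i z^{\pm 1}) = \theta_p(t_i z^{\pm 1})$ coming from the $q$-shift $T_q(t_i)$ acting on one of the eight-or-more gamma factors in the $I^{(m)}_n$ integrand.

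Next, I would substitute $z = z_1$ (one of the integration variables) into this theta identity, multiply through by the full $I^{(m)}_n$ integrand, and observe that each term in the sum is exactly the integrand of $T_q(t_i) I^{(m)}_n$: the shift $t_i \to q t_i$ changes $\eg(t_i z_1^{\pm 1})$ to $\eg(qt_i z_1^{\pm1}) = \theta_p(t_i z_1^{\pm 1})\eg(t_i z_1^{\pm1})$, and the accompanying shift in the balancing condition from $(pq)^mp$ to $(pq)^{m+1}p$ — wait, more carefully: applying $T_q(t_i)$ to the integrand of $I^{(m)}_n$ with balancing $(pq)^mp$ produces an integrand whose parameters multiply to $(pq)^{m+1}$, so it is genuinely an $I^{(m)}_n$ integrand again. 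Summing over $i$ with the stated coefficients $t_i/\prod_{j\ne i}\theta_p(t_it_j^{\pm 1})$ and using the theta identity pointwise in $z_1$ (and by symmetry the identity holds with any $z_k$, but one suffices) shows the integrands sum to zero. Integrating over $\T^n$ and pulling the ($z$-independent) coefficients out of the integral gives \eqref{rec-rel1}.

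Two technical points need care. The first is the contour issue already flagged in the text: after the $q$-shifts some poles of the shifted integrand may cross $\T$, so I would argue that there is a common contour $\cal C$, homotopic to $\T^n$ through the pole configurations of all $n+2$ shifted integrands simultaneously, on which the pointwise identity integrates termwise; since $I^{(m)}_n$ is the meromorphic continuation, the resulting identity persists. The second is verifying that the theta identity I invoke is the correct one — i.e., that the coefficient $t_i/\prod_{j\ne i}\theta_p(t_it_j^{\pm 1})$ is exactly the residue coefficient forced by the partial-fraction / vanishing-residue-sum argument, which in turn requires the degree count $n+2$ to match the number of free parameters in the $z$-dependence being balanced. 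This is the main obstacle: pinning down the exact elliptic function of $z$ whose residue theorem yields precisely \eqref{rec-rel1}, rather than a variant with extra $\theta_p(\cdot z^{\pm 1})$ factors or a different power of $t_i$. I expect it to come out of the same mechanism that produced \eqref{cont-1} from the three-term addition formula — namely, the $(n+2)$-term generalization of that addition theorem, applied to the integrand with one gamma factor per term shifted — and the balancing condition $\prod t_j = (pq)^mp$ is exactly what makes the relevant elliptic function of $z$ have the right quasi-periodicity for the residue sum to vanish.
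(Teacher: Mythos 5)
Your overall strategy --- prove the recurrence at the level of the integrand via an $(n+2)$-term generalization of the theta addition formula --- is exactly the paper's, but there is a genuine gap at precisely the point you flag as ``the main obstacle''. The operator $T_q(t_i)$ multiplies the $I^{(m)}_n$ integrand by $\prod_{j=1}^{n}\theta_p(t_i z_j^{\pm 1})$, one factor for \emph{each} of the $n$ integration variables, since the integrand contains $\prod_{j=1}^n\eg(t_iz_j^{\pm1})$ and $\eg(qt_iz_j^{\pm1})=\theta_p(t_iz_j^{\pm1})\eg(t_iz_j^{\pm1})$. The identity you therefore need is
\[
\sum_{1\le i\le n+2}\frac{t_i}{\prod_{j=1,\,j\ne i}^{n+2}\theta_p(t_it_j^{\pm1})}\,\prod_{1\le j\le n}\theta_p(t_iz_j^{\pm1})=0
\]
(Lemma A.1 of \cite{die-spi:selberg}, or Corollary 2.3 of \cite{rai:rec}), in which all $n$ variables $z_1,\dots,z_n$ appear simultaneously; this is also exactly why the recurrence has $n+2$ terms --- the count of poles that the residue/interpolation argument must balance grows with $n$. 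Your plan to ``substitute $z=z_1$'' and the remark that ``one suffices'' is therefore wrong: an identity involving $z_1$ alone cannot reproduce the multiplier $\prod_{j=1}^n\theta_p(t_iz_j^{\pm1})$ introduced by the $q$-shift, and your displayed identity, with its single factor $\theta_p(t_iz^{\pm1})$ and the undetermined product $\prod_j\theta_p(s_is_j')$, does not pin down the statement actually required.

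A second, smaller, misconception: the balancing condition plays no role in the theta identity itself --- the identity above holds for arbitrary $t_1,\dots,t_{n+2}$ and $z_1,\dots,z_n$, and the paper explicitly notes that this recurrence is completely independent of the balancing condition. The hypothesis $\prod_j t_j=(pq)^mp$ enters only to guarantee that each shifted integral $T_q(t_i)I^{(m)}_n$ is again a genuine balanced $I^{(m)}_n$ (since $(pq)^mp\cdot q=(pq)^{m+1}$), as you correctly observe mid-paragraph. Your remarks on the common contour are fine and match the paper's caveat. In short: right mechanism, but the proof is incomplete exactly where you said it was, and the missing identity is not the one you wrote down.
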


\begin{proof}
If we divide out by common factors of the integrand, this reduces to the
relation (see Lemma A.1 in \cite{die-spi:selberg} or Corollary 2.3 in \cite{rai:rec}):
\[
\sum_{1\le i\le n+2}
\frac{t_i}{\prod_{j=1,\, j\ne i}^{n+2} \theta_p(t_i t_j^{\pm 1})}
\prod_{1\le j\le n} \theta_p(t_i z_j^{\pm 1})
=
0.
\]
\end{proof}

This approach alone is insufficient to get the full system of difference
equations; for one thing, it is completely independent of the balancing
condition.  As in \cite{rai:rec}, the key is to multiply the integrand by
functions related to the difference operators of \cite{rai:trans}.

With this in mind, we consider the following function:
\begin{eqnarray*}
&& g^{(m)}(z;t_1,\dots,t_{m+2};v_1,\dots,v_{m+4})
\\ && \makebox[2em]{}
=  \frac{\prod_{1\le i\le m+4}\theta_p(v_i z)}
     {z\theta_p(z^2)\prod_{1\le i\le m+2}\theta_p((pq/t_i)z)}
+  \frac{\prod_{1\le i\le m+4}\theta_p(v_i/z)}
     {z^{-1}\theta_p(z^{-2})\prod_{1\le i\le m+2}\theta_p((pq/t_i)/z)}.
\end{eqnarray*}
By inspection, this is invariant under the change $z\mapsto 1/z$.
In addition, if
$\prod_{1\le i\le m+2}t_i$ $\prod_{1\le i\le m+4}v_i$ $ = (pq)^{m+1}q$,
then both terms are (meromorphic) $p$-theta functions with the same
multiplier, and thus
\[
g^{(m)}(pz;t_1,\dots,t_{m+2};v_1,\dots,v_{m+4})
=
pz^2 g^{(m)}(z;t_1,\dots,t_{m+2};v_1,\dots,v_{m+4}).
\]
Moreover, the apparent poles at $z=\pm 1,\pm\sqrt{p}$ must by symmetry have
even order, and thus $g^{(m)}$ must in fact be holomorphic at those points.
Thus $g^{(m)}$ has only simple poles at the points $(pq/t_i)^{\pm 1} p^\Z$;
it follows that
\[
g^{(m)}(z;t_1,\dots,t_{m+2};v_1,\dots,v_{m+4})
=
\sum_{1\le i\le m+2} \frac{\alpha_i}{\theta_p(pq z^{\pm 1}/t_i)}
\]
for suitable coefficients $\alpha_i$ which can be computed in the usual
way: clear the denominator and set $z = t_i/q$ to find
\[
\alpha_i =
\frac{q\prod_{1\le j\le m+4}\theta_p(v_j t_i/q)}
     {t_i\prod_{j=1,\ne i}^{m+2}\theta_p(t_j/t_i)}.
\]

The relevance of this function for our purposes is that it integrates to 0
against the $I^{(m)}_1$ density.  More precisely, we have
\[
\int_{|z|=1} g^{(m)}(z;t_1,\dots,t_{m+2};t_{m+3},\dots,t_{2m+6})
\frac{\prod_{1\le r\le 2m+6} \eg(t_r z^{\pm 1})}
     {\eg(z^{\pm 2})}
\frac{dz}{2\pi\sqrt{-1}z}
=
0,
\]
so long as $\prod_{1\le i\le 2m+6}t_i=(pq)^{m+1}q$ and
$|t_1|/q,\dots,|t_{m+2}|/q,|t_{m+3}|,\dots,|t_{2m+6}|<1$.
Indeed, by symmetry, both terms of $g^{(m)}$ have the same integral; if we
restrict to the first term (thus gaining a factor of 2), perform the
change of variables $z\mapsto q^{-1/2}/z$, and move the contour back to the
unit circle (which crosses over no poles), we obtain
\[
2
\int_{|z|=1}
\frac{q^{1/2} z}{\theta_p(z^2)}
\frac{
\prod_{1\le i\le m+2} \eg(q^{-1/2} t_i z^{\pm 1})
\prod_{1\le i\le m+4} \eg(q^{1/2} t_{m+2+i} z^{\pm 1})
}
{\eg(z^{\pm 2})}
\frac{dz}{2\pi\sqrt{-1}z}.
\]
But now the integrand is antisymmetric with respect to $z\mapsto z^{-1}$,
and therefore the integral vanishes.

Using the partial fraction decomposition of $g^{(m)}$, we thus obtain
a new recurrence.

\begin{lem}
If $t_1\cdots t_{2m+6} = (pq)^{m+1}q$, then we have the following
$(m+2)$-term recurrence for $I^{(m)}_1$:
\begin{equation}
\sum_{1\le k\le m+2}
\frac{\prod_{m+3\le i\le 2m+6} \theta_p(t_it_k/q)}
     {t_k\prod_{1\le i\le m+2;i\ne k} \theta_p(t_i/t_k)}
T_q(t_k)^{-1}
I^{(m)}_1(t_1,\dots,t_{2m+6}) =0.
\label{rec-rel}\end{equation}
\end{lem}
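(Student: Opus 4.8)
The plan is to read the recurrence off directly from the two facts just established about the auxiliary function $g^{(m)}$: its partial fraction decomposition
\[
g^{(m)}(z;t_1,\dots,t_{m+2};t_{m+3},\dots,t_{2m+6})=\sum_{1\le k\le m+2}\frac{\alpha_k}{\theta_p(pqz^{\pm1}/t_k)},\qquad \alpha_k=\frac{q\prod_{m+3\le i\le 2m+6}\theta_p(t_it_k/q)}{t_k\prod_{j=1,\,\ne k}^{m+2}\theta_p(t_j/t_k)},
\]
and the vanishing of its integral against the $I^{(m)}_1$ density. Substituting the former into the latter and interchanging the finite sum with the integral --- legitimate under the stated inequalities $|t_1|/q,\dots,|t_{m+2}|/q,|t_{m+3}|,\dots,|t_{2m+6}|<1$, since every summand is then integrable on $|z|=1$ --- reduces the Lemma to evaluating, for each $k$,
\[
J_k:=\kappa\int_{|z|=1}\frac{1}{\theta_p(pqz^{\pm1}/t_k)}\,\frac{\prod_{1\le r\le 2m+6}\eg(t_rz^{\pm1})}{\eg(z^{\pm2})}\,\frac{dz}{2\pi\sqrt{-1}z},
\]
and showing $J_k=T_q(t_k)^{-1}I^{(m)}_1(t_1,\dots,t_{2m+6})$.

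For this identification I would first simplify the theta kernel. Since $\theta_p(pw)=\theta_p(w^{-1})$, one has $\theta_p(pqz^{\pm1}/t_k)=\theta_p((t_k/q)z^{\pm1})$; then the functional equation $\eg(qw)=\theta_p(w)\eg(w)$, applied with $w=q^{-1}t_kz$ and $w=q^{-1}t_k/z$, gives $\eg(t_kz^{\pm1})/\theta_p(pqz^{\pm1}/t_k)=\eg(q^{-1}t_kz^{\pm1})$. Hence the integrand of $J_k$ is exactly the $I^{(m)}_1$ density with $t_k$ replaced by $q^{-1}t_k$; its parameters now multiply to $(pq)^{m+1}q\cdot q^{-1}=(pq)^{m+1}$, the correct balancing value, and the hypotheses force $|q^{-1}t_k|<1$ together with $|t_j|<1$ for $j\ne k$, so the unit circle remains an admissible contour and no residues intervene. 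Therefore $J_k=T_q(t_k)^{-1}I^{(m)}_1(t_1,\dots,t_{2m+6})$, and dividing the resulting identity $\sum_{1\le k\le m+2}\alpha_kJ_k=0$ through by $q$ and comparing with the coefficient in \eqref{rec-rel} completes the proof.

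There is no serious obstacle left: the analytic heart of the matter --- the vanishing of the $g^{(m)}$-integral via antisymmetrization after the substitution $z\mapsto q^{-1/2}/z$ --- is already in hand, so what remains is the elliptic-gamma bookkeeping above together with the observation that the contour and balancing conditions transfer cleanly to each shifted integral. The only point deserving a line of care is precisely this transfer: the slightly awkward hypothesis $|t_i|/q<1$ for $i\le m+2$ is exactly what guarantees that, after the $q^{-1}$-shift of $t_k$, the shifted parameter still lies inside the unit disc, so that $J_k$ is literally an $I^{(m)}_1$ over the unit circle rather than a residue-corrected version of one.
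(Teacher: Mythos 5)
Your proposal is correct and follows exactly the route the paper intends: substitute the partial fraction decomposition of $g^{(m)}$ into its vanishing integral against the $I^{(m)}_1$ density, and identify each term via $\eg(t_kz^{\pm1})/\theta_p(pqz^{\pm1}/t_k)=\eg(q^{-1}t_kz^{\pm1})$ as the $q^{-1}$-shifted integral. The paper leaves precisely this bookkeeping implicit, and your verification of the balancing and contour conditions fills it in correctly.
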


Applying the operator $T_p(t_{m+2+l})^{-1}$ to this equality,
we obtain
\begin{eqnarray}\label{zero-mode}
&& \sum_{1\leq k \leq m+2}v_kM_{kl}=0,\quad
v_k=\frac{\prod_{m+3\le i\le 2m+6} \theta_p(t_it_k/q)}
     {\prod_{1\le i\le m+2;i\ne k} \theta_p(t_i/t_k)},
\quad
\\ && \makebox[2em]{}
M_{kl}=T_q(t_k)^{-1}T_p(t_{m+2+l})^{-1}
I^{(m)}_1(t_1,\dots,t_{2m+6}).
\nonumber\end{eqnarray}

\begin{cor}
If $t_1\cdots t_{2n+2} = 1$, then $I^{(-1)}_n=0$.
\end{cor}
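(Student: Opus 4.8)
The plan is to read off $I^{(-1)}_n=0$ directly from the determinant representation established above together with the univariate recurrence~\eqref{rec-rel} (equivalently, its $T_p$-shifted form~\eqref{zero-mode}). For $m=-1$ the balancing condition is precisely $\prod_{j=1}^{2n+2}t_j=1$, and the determinant representation writes $I^{(-1)}_n(t_1,\dots,t_{2n+2})$ as $P(t)\,\det_{1\le i,j\le n}(\tilde M_{ij})$ with $P(t)$ an explicit, generically nonvanishing prefactor and
\[
\tilde M_{ij}=T_q(t_i)^{-1}T_p(t_{n+j})^{-1}\,I^{(n-2)}_1(qt_1,\dots,qt_n,pt_{n+1},\dots,pt_{2n},t_{2n+1},t_{2n+2}).
\]
So it suffices to prove $\det\tilde M\equiv 0$, i.e., to exhibit a linear relation among the rows of $\tilde M$ with coefficients independent of the column.

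The key point is that the $l$-th column of $\tilde M$ is the image of a single integral under $T_q$-shift operators in $t_1,\dots,t_n$: performing the $T_p$-shift first turns $pt_{n+l}$ into $t_{n+l}$, so $\tilde M_{kl}=T_q(t_k)^{-1}F_l$ with
\[
F_l=I^{(n-2)}_1\bigl(qt_1,\dots,qt_n,\;pt_{n+1},\dots,pt_{n+l-1},\,t_{n+l},\,pt_{n+l+1},\dots,pt_{2n},\;t_{2n+1},t_{2n+2}\bigr)
\]
independent of $k$. Using $\prod_{j=1}^{2n+2}t_j=1$, the parameters of $F_l$ multiply to $q^{n}p^{n-1}=(pq)^{(n-2)+1}q$, which is exactly the hypothesis of~\eqref{rec-rel} with $m=n-2$; in that case the recurrence has $n$ terms and shifts precisely the first $n$ parameters $qt_1,\dots,qt_n$. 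Applying it to $F_l$ (so that the $k$-th shifted integral is $T_q(t_k)^{-1}F_l=\tilde M_{kl}$) produces
\[
\sum_{k=1}^{n}c_{k,l}\,\tilde M_{kl}=0,\qquad
c_{k,l}=\frac{\theta_p(t_{n+l}t_k)\prod_{j\ne l}\theta_p(pt_{n+j}t_k)\,\theta_p(t_{2n+1}t_k)\,\theta_p(t_{2n+2}t_k)}{qt_k\prod_{i\ne k}\theta_p(t_i/t_k)}.
\]

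It remains to check that the coefficients may be taken independent of $l$. Replacing each $\theta_p(pt_{n+j}t_k)$ by $-(t_{n+j}t_k)^{-1}\theta_p(t_{n+j}t_k)$ and regrouping, one finds $c_{k,l}=t_{n+l}\,\hat c_k$ with
\[
\hat c_k=\frac{(-1)^{n-1}}{q\,t_k^{\,n}\prod_{j=1}^{n}t_{n+j}}\cdot
\frac{\prod_{j=1}^{n}\theta_p(t_{n+j}t_k)\,\theta_p(t_{2n+1}t_k)\,\theta_p(t_{2n+2}t_k)}{\prod_{i\ne k}\theta_p(t_i/t_k)}
\]
free of $l$. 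Hence $\sum_{k=1}^{n}\hat c_k\,\tilde M_{kl}=0$ for every $l=1,\dots,n$, so $(\hat c_k)_{k=1}^{n}$ --- which is not identically zero --- is a nontrivial left null vector of the $n\times n$ matrix $\tilde M$ (a degenerate instance of Lemma~\ref{lem-kernel}), whence $\det\tilde M\equiv 0$ and $I^{(-1)}_n=0$.

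The only step needing care is the choice of contour. When $\prod_j t_j=1$ one cannot take $\T$ for all of the $\tilde M_{ij}$ and the $F_l$ at once, so one first moves to a common admissible contour for all the univariate integrals involved; this is harmless, since~\eqref{rec-rel} is the integral of a pointwise partial-fraction identity (hence valid on any contour) and the Heine identity behind the determinant representation is contour-insensitive, and the resulting identity then extends to all admissible parameters by analytic continuation. One also uses the routine facts that $\hat c_k\not\equiv 0$ and $P(t)\not\equiv 0$ generically.
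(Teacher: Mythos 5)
Your proposal is correct and follows essentially the same route as the paper: write $I^{(-1)}_n$ via the determinant representation as a nonvanishing prefactor times $\det\tilde M$ with entries of the form $I^{(n-2)}_1$, then use the $T_p$-shifted univariate recurrence (the paper's \eqref{zero-mode} with $m=n-2$) to exhibit a column-independent left null vector, forcing the determinant to vanish. Your explicit factorization $c_{k,l}=t_{n+l}\hat c_k$ and the remark on choosing a common contour merely spell out details the paper leaves implicit.
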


Indeed, as shown above the integral $I^{(-1)}_n$ is proportional
to the determinant of the matrix $M$ in \eqref{zero-mode} with $m=n-2$
and scaled parameters. However, the vector $v=(v_1,\ldots,v_n)$ belongs to
the kernel of $M$, $vM=0$, and, so, $\det M=0$.
For $n=2$, such a result follows also from \eqref{V-det} and the elliptic beta integral.
This statement proves the vanishing hypothesis of \cite{die-spi:selberg},
which was needed there for a proof of the evaluation formula for the
elliptic Dixon integral.

Applying Lemma \ref{lem-kernel} to relation \eqref{zero-mode}, we obtain
equality $\sum_{k=n}^{m+2}v_k d_k=0$, where $d_k=\det(M_{ll'})$ with
$l\in\{1,\ldots,n-1,k\}$, $l'\in\{1,\ldots,n\}$. The minors $d_k$
are proportional to the integrals
$I_n^{(m-n+1)}(t_1/q,\ldots,t_{n-1}/q,$ $t_n,\ldots,$ $t_k/q,
\ldots,$ $t_{m+2},t_{m+3}/p,$ $\ldots,t_{m+3+n}/p,t_{m+4+n},
\ldots,t_{2m+6})$. Substituting corresponding explicit expressions,
multiplying parameters $t_1,\ldots,t_{n-1}$ by $q$ and
$t_{m+3},\ldots,t_{m+3+n}$ by $p$, changing $m\to m+n-1$, and
permuting parameters appropriately, we obtain a recurrence for general
$I_n^{(m)}$-integrals.

\begin{thm}
If $t_1\cdots t_{2m+2n+4} = (pq)^{m+1}q$, then we have the following
$(m+2)$-term recurrence for $I^{(m)}_n$:
\begin{equation}
\sum_{1\le k\le m+2}
\frac{\prod_{m+3\le i\le 2n+2m+4} \theta_p(t_it_k/q)}
     {t_k\prod_{1\le i\le m+2;i\ne k} \theta_p(t_i/t_k)}
T_q(t_k)^{-1}
I^{(m)}_n(t_1,\dots,t_{2n+2m+4}) =0.
\label{rec-rel2}\end{equation}
\end{thm}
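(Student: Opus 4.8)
The plan is to obtain \eqref{rec-rel2} as the image of the univariate recurrence \eqref{rec-rel} under the determinant representation of $I^{(m)}_n$ constructed above, with Lemma \ref{lem-kernel} as the algebraic engine: a left null vector of a rectangular matrix forces a linear relation among its maximal minors, and in our situation those minors are themselves higher-order Dixon integrals in $n$ variables.

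The starting point is the kernel identity \eqref{zero-mode}, taken with the formal index $m$ there replaced by $m+n-1$. It exhibits the vector $v$ written there in the left kernel $vM=0$ of the rectangular matrix $M$, whose entries are copies of a univariate Dixon integral $I^{(m+n-1)}_1$ in which one row-indexed parameter carries a residual $q$-shift and one column-indexed parameter a residual $p$-shift. Applying Lemma \ref{lem-kernel} with block size $n$ (equivalently, to the submatrix of $M$ on its first $n$ columns) --- keeping the first $n-1$ rows fixed and letting the last one run over the remaining indices --- produces the linear relation $\sum_k v_k d_k=0$, where $d_k$ is the $n\times n$ minor of $M$ supported on rows $\{1,\dots,n-1,k\}$ and columns $\{1,\dots,n\}$.

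The crux is to recognize each minor $d_k$ as an $n$-dimensional integral. The entries of $d_k$ are precisely univariate Dixon integrals in the form produced by expanding an $n$-dimensional $I^{(m)}_n$ via the Heine identity and the elliptic analogue of the Cauchy determinant, as done above; reversing that expansion identifies $d_k$, up to the explicit prefactor $\prod_{i<j}a_j\theta_p(a_ia_j^{\pm1})\,b_j\theta_q(b_ib_j^{\pm1})$ coming from the Cauchy determinant, with a copy of $I^{(m)}_n$ at parameters obtained from $t_1,\dots,t_{2m+2n+4}$ by dividing $t_1,\dots,t_{n-1}$ and $t_k$ by $q$ and the column block by $p$. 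Substituting the explicit $v_k$ and this prefactor into $\sum_k v_k d_k=0$, the factors $\theta_p(t_i/t_k)$ with $1\le i\le n-1$ coming from the Cauchy prefactor cancel the matching factors in the denominator of $v_k$, and after the cosmetic reindexing indicated above (multiplying the appropriate parameters by $q$ and by $p$ and permuting the spectator parameters) the coefficient of $T_q(t_k)^{-1}I^{(m)}_n$ and the balancing condition collapse precisely to those in \eqref{rec-rel2}, yielding the asserted $(m+2)$-term recurrence.

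I expect the main obstacle to be the reverse-Heine identification of the preceding paragraph: one has to match the exact pattern of $q$- and $p$-shifts of the minor's entries against the determinant representation --- in particular, checking that the active row contributes just the residual operator $T_q(t_k)^{-1}$ applied to $I^{(m)}_n$, with no leftover shift --- carry the elliptic Cauchy prefactor through the reindexing without a sign or scaling error, and dispose of the contour subtlety (the $m=-1$ phenomenon noted above) that for some parameter values the unit circle is not an admissible contour for every entry of the determinant. That last point is harmless, as already observed: a common admissible contour always exists, and the Heine identity does not depend on the contour chosen.
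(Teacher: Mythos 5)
Your proposal is correct and follows essentially the same route as the paper: apply Lemma \ref{lem-kernel} to the kernel relation \eqref{zero-mode} (at univariate level $m+n-1$, which the paper does via the substitution $m\to m+n-1$ at the end rather than at the start), identify the resulting $n\times n$ minors with shifted copies of $I^{(m)}_n$ by reversing the Cauchy--Heine determinant expansion, and absorb the Cauchy prefactor into the coefficients. The points you flag as delicate (the exact pattern of residual shifts, the prefactor bookkeeping, and the contour issue) are precisely the ones the paper also leaves to the reader.
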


\begin{cor}\cite{rai:trans,spi:short}
If $t_1\cdots t_{2n+4}=pq$, then
\[
I^{(0)}_n(t_1,\dots,t_{2n+4}) = \prod_{1\le i<j\le 2n+4} \eg(t_it_j).
\]
\end{cor}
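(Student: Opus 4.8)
The plan is to read the $m=0$ instance of \eqref{rec-rel2} as a first--order difference equation which, together with its $p\leftrightarrow q$ partner and the $S_{2n+4}$ symmetry, characterises $I^{(0)}_n$ up to a constant; the asserted product is then exhibited as a solution of the same equations, and the constant is pinned down by a degeneration.

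First I would specialise \eqref{rec-rel2} to $m=0$, where it becomes the two--term relation
\begin{equation*}
\frac{\prod_{3\le i\le 2n+4}\theta_p(t_it_1/q)}{t_1\,\theta_p(t_2/t_1)}\,T_q(t_1)^{-1}I^{(0)}_n
+
\frac{\prod_{3\le i\le 2n+4}\theta_p(t_it_2/q)}{t_2\,\theta_p(t_1/t_2)}\,T_q(t_2)^{-1}I^{(0)}_n
=0,
\end{equation*}
where throughout $\prod_{j=1}^{2n+4}t_j=pq^2$. Both shifted integrals here live on the surface $\prod_j t_j=pq$, and if we fix $t_3,\dots,t_{2n+4}$ then $I^{(0)}_n$ is a function of the single free variable $t_1$ on that surface (with $t_2$ determined), and the relation connects its values at $t_1$ and at $t_1/q$: it is a first--order $q$--difference equation with explicit theta--quotient coefficients. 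I would then check --- a routine computation using $\eg(qx)=\theta_p(x)\eg(x)$ and the balancing condition --- that $P(\underline t):=\prod_{1\le i<j\le 2n+4}\eg(t_it_j)$ satisfies this same relation.

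Since $I^{(0)}_n$ and $P$ are both invariant under $p\leftrightarrow q$, they also solve the first--order $p$--difference equation obtained from the displayed relation by that swap. Hence the ratio $R:=I^{(0)}_n/P$ is a meromorphic function of the free variable invariant under multiplication by $q$ and by $p$; with $p,q$ multiplicatively independent such a function is constant, which is precisely the uniqueness mechanism proved for Theorem~\ref{V-char}. Permuting the parameters --- both sides being $S_{2n+4}$--symmetric --- shows $R$ is independent of every $t_j$, so $I^{(0)}_n=C(p,q)\,P$. To evaluate $C$, I would run the standard contour--pinching residue argument of \cite{die-spi:selberg}: as $t_{2n+3}t_{2n+4}\to 1$ both sides acquire the same leading singularity, with residue proportional to the corresponding statement for $I^{(0)}_{n-1}$, so $C$ is independent of $n$; bottoming out either at $n=2$ (already established from the Casoratian evaluation \eqref{V-det}) or at the $n=1$ elliptic beta integral \eqref{e-beta} gives $C=1$.

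The step I expect to be the main obstacle is the uniqueness argument itself. One must be careful that, once the balancing constraint is imposed, the two--term recurrence really is a \emph{first--order} scalar difference equation --- so that ``two solutions differ by an elliptic factor'' is legitimate --- and that the resulting $p$-- and $q$--periodic ratio cannot conceal a nonconstant quotient of theta functions. This is exactly where the incommensurability of $p$ and $q$ enters, and it is supplied by the general Galois--type argument of the final section. By contrast, the derivation of \eqref{rec-rel2} already rests on the vanishing $I^{(-1)}_n=0$ proved above, and the residue bookkeeping that fixes $C$ is routine once the common integration contour is chosen so that the pinch picks up exactly the $(n-1)$--dimensional integral.
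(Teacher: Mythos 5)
Your proposal is correct and follows essentially the same route as the paper: both sides satisfy the two--term $m=0$ instance of \eqref{rec-rel2} (and its $p\leftrightarrow q$ partner), so the ratio is invariant under the balanced $q$-- and $p$--shifts and hence constant for incommensurate $p,q$, with the constant fixed by the limit $t_{2n+3}t_{2n+4}\to 1$ and induction on $n$. The only cosmetic difference is that you invoke the Galois--theoretic machinery for the uniqueness step, whereas for this first--order scalar situation the elementary fact that a function invariant under both $p$-- and $q$--multiplication is constant already suffices, as the paper implicitly uses.
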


\begin{proof}
Indeed, both sides satisfy the same 2-term recurrence, and thus their ratio
is invariant under $T_q(t_i)^{-1}T_q(t_j)$, and similarly for $p$-shifts.
It follows that their ratio is independent of $t_1,\dots,t_{2n+4}$.
To determine the remaining factor, we may consider the limit of the ratio
as $t_{2n+3}t_{2n+4}\to 1$, and proceed by induction.
\end{proof}

\begin{cor}
If $t_1\cdots t_{2n+2m+4}=pq$, then we have the $\binom{n+m}{m}$-dimensional
determinant
\begin{align}
\det_{R,S\subset \{1,2,\dots,n+m\};|R|=|S|=m}
\left(
\prod_{r\in R} T_p(t_r) \prod_{s\in S} T_q(t_{s+n+m})
I^{(m)}_n(t_1,\dots,t_{2n+2m+4})
\right)&\notag\\
=
\bigl[
\prod_{1\le i<j\le n+m}
  t_j\theta_p(t_it_j^{\pm 1})
  t_{n+m+j}\theta_p(t_{n+m+i}t_{n+m+j}^{\pm 1})
\bigr]^{\binom{n+m-2}{m-1}}
&\notag\\  \times
\prod_{1\le i<j\le 2n+2m+4} \eg(t_it_j)^{\binom{n+m-1}{m}}.&
\notag
\end{align}
\end{cor}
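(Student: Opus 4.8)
The plan is to realise the matrix whose determinant appears on the left-hand side as, up to explicit scalar factors, the $n$-th compound matrix of the $(n+m)\times(n+m)$ matrix that represents $I^{(0)}_{n+m}$ as a determinant of univariate integrals, and then to combine the Sylvester--Franke identity with the evaluation of $I^{(0)}_{n+m}$ already obtained above. Observe first that the hypothesis $t_1\cdots t_{2n+2m+4}=pq$ is precisely the $m=0$ balancing condition for $I^{(0)}_{n+m}(t_1,\dots,t_{2n+2m+4})$, so no rescaling of parameters is needed. Applying the Cauchy--determinant and Heine reduction of the previous section to $I^{(0)}_{n+m}$ itself (with ``$n$'' there replaced by $n+m$ and ``$m$'' by $0$), taking $a_i=t_i$ and $b_i=t_{n+m+i}$, one writes $I^{(0)}_{n+m}=\rho_0^{-1}\det\mathcal M$, where $\rho_0$ is the associated product of theta functions of the $t_i$ and of monomials (one factor over each parameter group, as in the displayed Heine reduction) and $\mathcal M$ is an $(n+m)\times(n+m)$ matrix each entry of which is a $T_p$- and $T_q$-shifted copy of an $I^{(n+m-1)}_1$ integral, of exactly the shape exhibited in the text. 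The contour point noted there (that one cannot always integrate over $\T$ when $m=-1$) is handled as before, by using a common contour valid for all the integrals involved.

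Next I would run the same reduction on sub-determinants. For $n$-subsets $\bar R,\bar S\subseteq\{1,\dots,n+m\}$ with complements $R,S$ of size $m$, pulling the factors common to the $(\bar R,\bar S)$ block into the density and using $\theta_p(t_kz^{\pm1})=\eg(qt_kz^{\pm1})/\eg(t_kz^{\pm1})$ and its $\theta_q$ analogue, the minor $\det\mathcal M[\bar R,\bar S]$ becomes an explicit product of theta functions and monomials times $\prod_{r\in R}T_p(t_r)\prod_{s\in S}T_q(t_{s+n+m})\,I^{(m)}_n(t_1,\dots,t_{2n+2m+4})$: passing from $1$ to $n$ integration variables lowers the order parameter by $n-1$, so $I^{(n+m-1)}_1$ becomes $I^{(m)}_n$, and the balancing recomputes to $(pq)^{m+1}$, as it must. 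This is the mechanism behind the text's observation that the $k\times k$ minors are proportional to $I^{(n+m-k)}_k$ integrals; the additional point needed is that the proportionality constant factors as $\rho(R)\sigma(S)$, with $\rho$ built only from the first group of parameters and $\sigma$ only from the second. Hence these constants pull out of the $\binom{n+m}{m}$-dimensional determinant row by row and column by column, and the left-hand side equals $\bigl(\prod_R\rho(R)\prod_S\sigma(S)\bigr)^{-1}\det\mathcal C_n(\mathcal M)$, where $\mathcal C_n(\mathcal M)$ denotes the $n$-th compound matrix of $\mathcal M$.

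Now I would apply the Sylvester--Franke identity $\det\mathcal C_n(\mathcal M)=(\det\mathcal M)^{\binom{n+m-1}{n-1}}$ and substitute $\det\mathcal M=\rho_0\prod_{1\le i<j\le 2n+2m+4}\eg(t_it_j)$ from the evaluation of $I^{(0)}_{n+m}$. Since $\binom{n+m-1}{n-1}=\binom{n+m-1}{m}$, the $\eg$-product already appears to the advertised power. The remaining theta-and-monomial prefactor is $\rho_0^{\binom{n+m-1}{m}}\big/\bigl(\prod_R\rho(R)\prod_S\sigma(S)\bigr)$, and its exponents collapse by a Pascal identity: a fixed factor $t_j\theta_p(t_it_j^{\pm1})$ with $1\le i<j\le n+m$ occurs once in $\rho_0$, hence to the power $\binom{n+m-1}{m}$, and occurs in $\rho(R)$ precisely for those $m$-subsets $R$ disjoint from $\{i,j\}$, hence in $\prod_R\rho(R)$ to the power $\binom{n+m-2}{m}$, giving net exponent $\binom{n+m-1}{m}-\binom{n+m-2}{m}=\binom{n+m-2}{m-1}$; the same count applies to the second-group factors through $\sigma(S)$. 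The residual monomials in the $t_i$ and in $p,q$ coming from the iterated shifts and the constants in the Cauchy and Heine identities combine to a scalar, which is then pinned down to $1$ by a single specialisation of the parameters --- for instance, by comparison with the Casoratian \eqref{V-det} when $n+m=2$.

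The routine but genuinely fiddly part, and what I expect to be the main obstacle, is exactly this bookkeeping: tracking every monomial prefactor through the iterated $q$- and $p$-shifts, confirming that the constant relating each minor of $\mathcal M$ to the corresponding entry of the stated determinant really does split as $\rho(R)\sigma(S)$, and matching the theta-bases and shift-orientations in the two parameter groups with the orientation fixed for $\mathcal M$. No recurrence such as \eqref{rec-rel2}, and no instance of Lemma~\ref{lem-kernel}, is needed directly here; these enter only through the evaluation of $I^{(0)}_{n+m}$ that is quoted.
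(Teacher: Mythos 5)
Your proposal is correct and follows essentially the same route as the paper: both express each $(R,S)$ entry via the Cauchy--Heine determinantal representation as an explicit theta prefactor times a minor of the fixed $(n+m)\times(n+m)$ matrix of shifted $I^{(n+m-1)}_1$ integrals, pull out the row/column factors, apply the compound-matrix (Sylvester--Franke / exterior-power) determinant identity, substitute the known value of the big determinant via the $I^{(0)}_{n+m}$ evaluation, and finish with the same $\binom{n+m-1}{m}-\binom{n+m-2}{m}=\binom{n+m-2}{m-1}$ exponent count. The only cosmetic difference is that you fix a residual scalar by specialisation where the paper tracks the prefactors exactly.
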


\begin{proof}
Using the determinantal representation of $I^{(m)}_n$, we can express the
$(R,S)$ entry of the above determinant as
\begin{align}\nonumber
&\Bigl(
\prod_{\substack{i,j\in R^c\\i<j}}t_j\theta_p(t_it_j^{\pm 1})
\prod_{\substack{i,j\in S^c\\i<j}}t_{n+m+j}\theta_p(t_{n+m+i}t_{n+m+j}^{\pm
  1})
\Bigr)^{-1}\\
&\times
\det_{\substack{i\in R^c\\j\in S^c}}\Bigl(
 \prod_{\substack{1\le r\le n+m\\r\ne i}} T_p(t_r)
 \prod_{\substack{1\le s\le n+m\\s\ne j}} T_q(t_{n+m+s})
 I^{(m+n-1)}_1(t_1,\dots,t_{2n+2m+4})\Bigr),\notag
\end{align}
where $R^c=\{1,2,\dots,n+m\}\setminus R$.  The first two factors can be
pulled out of the determinant, as they are independent of the column or row
as appropriate; this gives an overall factor
\begin{align}\nonumber
\prod_{\substack{R\subset \{1,2,\dots,n+m\}\\|R|=m}}
\prod_{\substack{i,j\in R^c\\i<j}} (t_j\theta_p(t_it_j^{\pm 1}))^{-1}
&=
\prod_{1\le i<j\le n+m}
\prod_{\substack{R\subset \{1,2,\dots,n+m\}\\|R|=m;\ i,j\notin R}}
(t_j\theta_p(t_it_j^{\pm 1}))^{-1}\\
&=
\prod_{1\le i<j\le n+m}
(t_j\theta_p(t_it_j^{\pm 1}))^{-\binom{n+m-2}{m}}.
\nonumber\end{align}
We are thus left with computing a determinant of determinants.  These are
all minors of a fixed matrix, and thus our $\binom{n+m}{m}\times
\binom{n+m}{m}$ matrix is the $n$-th exterior power of the $n+m\times n+m$
matrix with $ij$ entry
\[
 T_p(t_i)^{-1} T_q(t_{n+m+j})^{-1}
 \prod_{1\le r\le n+m} T_p(t_r)T_q(t_{n+m+r})
 I^{(m+n-1)}_1(t_1,\dots,t_{2n+2m+4}),
\]
i.e., the matrix giving the action of the original matrix on the $n$-th
exterior power of its natural module.  In general, the $n$-th exterior
power of the $n+m\times n+m$ matrix $M$ has determinant
\[
\det(M)^{\binom{n+m-1}{n-1}},
\]
which can be seen easily by reduction to the diagonal case (the naturality
of the construction implies invariance under conjugation).  In our case,
the $n+m\times n+m$ matrix has determinant
\[
\left[
\prod_{1\le i<j\le n+m}t_j\theta_p(t_it_j^{\pm 1})
t_{n+m+j}\theta_p(t_{n+m+i}t_{n+m+j}^{\pm 1})
\right]
I_{m+n}^{(0)}(t_1,\dots,t_{2n+2m+4}),
\]
which implies the desired result.
\end{proof}

{\em Remark 1.}  When $m=1$, this is essentially an elliptic version of Varchenko's
determinant of univariate hypergeometric integrals \cite{var1,rich}.
Indeed, Varchenko's determinant is equivalent to Dixon's integral
evaluation, which is a limit of the $I^{(0)}_n$ evaluation formula
\cite[Thm. 7.2]{rai:limits}.  Convergence of the matrix itself is
somewhat more subtle, as the domain of integration tends to pass through
algebraic singularities of the integrand in the limit, thus giving rise to
somewhat tricky phase issues.  One does find, however, that
\begin{align}
\lim_{q\to 1^-}
&\frac{\eg(q^{\alpha_1+\alpha_2})}{\eg(q^{\alpha_1},q^{\alpha_2},q^{\alpha^+_1+\alpha^+_2}a_1a_2,q^{\alpha^+_1+\alpha^-_2}a_1/a_2,q^{\alpha^-_1+\alpha^+_2}a_2/a_1,q^{\alpha^-_1+\alpha^-_2}/a_1a_2)}\notag\\
&
I^{(n-1)}_1(pq^{1-\beta^+_1}/b_1,\dots,pq^{1-\beta^+_n}/b_n,
            q^{-\beta^-_1}b_1,\dots,q^{-\beta^-_n}b_n,\notag\\
&\phantom{I^{(n-1)}_1(}
            q^{\alpha^+_1}a_1,q^{\alpha^-_1}/a_1,
            q^{\alpha^+_2}a_2,q^{\alpha^-_2}/a_2;p,q)\notag\\
&{}=
|\theta_p(a_1 a_2^{\pm 1})|^{1-\alpha_1-\alpha_2}
\frac{\Gamma(\alpha_1+\alpha_2)}{\Gamma(\alpha_1)\Gamma(\alpha_2)}
(2\pi(p;p)_\infty^2)\notag\\
&\phantom{{}={}}
\int_{z\in [a_1,a_2]}
|\theta_p(a_1 z^{\pm 1})|^{\alpha_1-1}
|\theta_p(a_2 z^{\pm 1})|^{\alpha_2-1}
\prod_{1\le r\le n} \theta_p(b_r z^{\pm 1})^{\beta_r}
\frac{|\theta_p(z^2)|dz}{2\pi\sqrt{-1}z},
\end{align}
where $a_1$, $a_2$, $b_1$,\dots,$b_n$ are on the unit circle with positive
imaginary part and $\Re(a_1)>\Re(a_2)$, the exponents satisfy the
convergence conditions $\Re(\alpha^\pm_i)>0>\Re(\beta^-_i)$, and one has
the balancing condition
$
\alpha_1+\alpha_2 = \sum_i \beta_i,
$
with $\alpha_i:=\alpha^+_i+\alpha^-_i$, $\beta_i:=\beta^+_i+\beta^-_i$.
The domain of integration is the counterclockwise arc from $a_1$ to $a_2$.
The proof is as in Theorem 7.2 of \cite{rai:limits}; note also that by
Lemma 7.1, op. cit., one has
\[
\theta_p(b_r z^{\pm 1})^{\beta_r}
=
\exp\bigl(-\beta_r(\log(-b_r z)+\log(-b_r/z))/2\bigr)
|\theta_p(b_r z^{\pm 1})|^{\beta_r}.
\]
The resulting factor is in particular locally constant in $z$ on the upper
semicircle, changing only when $z$ passes over one of the singularities
$b_i$.  Also, as noted in \cite{rai:limits}, the change of variables
$x=-\theta_p(z)^2/\theta_p(-z)^2$ turns this into a higher-order ordinary
beta integral, precisely as appears in Varchenko's determinant identity.
Our result can also be viewed as a generalization of the main result of
the work of Aomoto and Ito \cite{ai3},
which corresponds to a further degeneration of the
trigonometric integral of \cite[Thm. 5.6]{rai:limits}; the Jackson integral
is obtained as the sum of residues obtained upon shrinking the contour to
0.

The integrals $I_n^{(m)}$ thus provide solutions of the system of
recurrence relations \eqref{rec-rel1} and  \eqref{rec-rel2}
and their partners obtained after permutations of parameters.
However, these recurrence relations do not imply the constraint
$|q|<1$ which is needed for the definition of $I_n^{(m)}$.
The construction of solutions of the elliptic hypergeometric
equation with $|q|>1$ \eqref{q>1} extends to arbitrary values of $n$ and $m$.
Modulo some ellipticity factor, the $q$-shift equations satisfied by
$$
I^{(m)}_n(t_1,\dots,t_{2n+2m+4})
\prod_{1\le i\le 2n+2m+4} \eg(t_i z^{\pm 1})^{-1}
$$
are invariant under the transformation
$
t_i\mapsto p^{a_i}/t_i,$ $ q\mapsto 1/q,
$
where $a_1,\dots,a_{2n+2m+4}$ is any sequence of integers or half-integers
(but not mixed) such that $\sum_{i=1}^{2n+2m+4}a_i = 2m+2$. That is, for
both recurrence relations, the rescaled coefficient of each term gets
multiplied by the same quantity under such a transformation.  This observation
provides us with the solutions of those recurrences for $|q|>1$.
Solutions with $|q|=1$ are obtained (as in the $n=m=1$ case)
by using the modified elliptic gamma-function which we do not consider
for brevity.

\section{A proof of the transformation formula}

The two recurrences we have given for $I^{(m)}_n$ are also
recurrences for the right-hand side of relation \eqref{trafo}: the transformation
simply swaps the two kinds of recurrence. The proof of Theorem \ref{tr-thm} using
these difference equations boils down to showing that $I^{(m)}_n$ is the
unique meromorphic solution of the full system of $p$- and $q$-difference
equations, up to an overall constant (which is then easy to obtain via a
limit). As a subcase, this proves Theorem \ref{V-char} as well.
  It is not particularly elementary; it involves some
difference/differential Galois theory \cite{vanderPut/Singer}.  We start by
giving the key auxiliary statement needed for us.

One technical issue that arises in our application of difference Galois
theory is that the literature, and many of the main statements, require
that the constant field be algebraically closed.  Since we are dealing in
our case with a {\em family} of difference equations, this constraint is
too strict.  However, we can evade this issue via a suitable base change,
as follows.  Let $(k,\tau)$ be a difference field (i.e., $\tau$ is an
automorphism of the field $k$) of characteristic 0 such that the field
$k^\tau$ of constants is algebraically closed in $k$.  (In other words, the
only finite orbits of $\tau$ on $k$ are of length 1.)  There is then a
canonical extension of $\tau$ to the field $l:=k\otimes_{k^\tau}
\overline{k^\tau}$, such that {\em the algebraic closure}
$\overline{k^\tau}$ is the new constant
field.  We thus obtain a well-defined notion of Galois group over $l$.

\begin{lem} Let $A\in GL_n(k)$, and let $W$ be the space of vectors $w\in k^n$
such
that $\tau(w)=A w$.  Let $G$ be the Galois group of this difference
equation
(viewed as an equation with coefficients in $l$), with associated
representation $V$.  Then
\[
\dim_{k^\tau}(W) = \dim_l(V^G).
\]
\end{lem}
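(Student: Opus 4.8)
The plan is to relate the two sides by a Picard--Vessiot-style dimension count, using the base change to $l=k\otimes_{k^\tau}\overline{k^\tau}$ to guarantee that the difference Galois group is well defined with algebraically closed constants. First I would recall that, over $l$, the equation $\tau(w)=Aw$ admits a Picard--Vessiot extension $R/l$ with $R^\tau=\overline{k^\tau}$, and that the fundamental solution matrix $\mathcal{U}\in GL_n(R)$ gives an isomorphism of the solution space $W_R:=\{w\in R^n:\tau(w)=Aw\}$ with $\overline{k^\tau}^n$ by $w\mapsto \mathcal{U}^{-1}w$; under this identification the Galois group $G=\mathrm{Aut}_\tau(R/l)$ acts on $W_R$ exactly through the associated representation $V$. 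The key point is then that the solutions defined over $l$ itself (rather than over $R$) are precisely the $G$-invariant ones: $W_l:=\{w\in l^n:\tau(w)=Aw\}$ corresponds under $\mathcal{U}^{-1}$ to $V^G$, by the standard Galois-correspondence fact that $R^G=l$ (equivalently, $w\in R^n$ lies in $l^n$ iff it is fixed by every $\sigma\in G$). This gives $\dim_{\overline{k^\tau}}W_l=\dim_{\overline{k^\tau}}V^G=\dim_l V^G$.

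The remaining step is the descent from $l$ back to $k$, i.e.\ showing $\dim_{k^\tau}W=\dim_{\overline{k^\tau}}W_l$ where $W=\{w\in k^n:\tau(w)=Aw\}$. Since $l=k\otimes_{k^\tau}\overline{k^\tau}$ and $\tau$ acts on $l$ only through the first factor, the space of solutions in $l^n$ is $W_l=W\otimes_{k^\tau}\overline{k^\tau}$: indeed, solving the linear system $\tau(w)=Aw$ over $l$ is the same as solving it over $k$ after the flat scalar extension $k^\tau\to\overline{k^\tau}$, because $A$ has entries in $k$ and $\tau$ is $\overline{k^\tau}$-linear on $l$, so the solution functor commutes with this base change (the solution space is the kernel of a $k$-linear map, and kernels commute with flat base change, provided we know $k\to l$ is faithfully flat, which it is since $k^\tau\to\overline{k^\tau}$ is). Hence $\dim_{\overline{k^\tau}}W_l=\dim_{k^\tau}W$, and combining with the previous paragraph yields the claim.

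The main obstacle is the first paragraph's Galois-correspondence input in the setting of a \emph{general} difference ring rather than a field: one must be careful that the Picard--Vessiot ring $R$ is a domain (or at least that $R^G=l$ holds), which requires $l^\tau=\overline{k^\tau}$ to be algebraically closed --- exactly what the base change was arranged to provide. I would cite the relevant structure theorem (e.g.\ from \cite{vanderPut/Singer}): when the constant field is algebraically closed, the Picard--Vessiot extension exists, its ring of constants equals the base constant field, and the total ring of fractions carries a $G$-action with fixed subring $l$. Once that is in hand, the identification of $W_l$ with $V^G$ is formal: an element $w=\mathcal{U}c$ with $c\in\overline{k^\tau}^n$ lies in $l^n$ iff $\sigma(\mathcal{U})\sigma(c)=\mathcal{U}c$ for all $\sigma\in G$, i.e.\ iff $c$ is fixed under $\sigma\mapsto [\sigma]$ acting via $V$, since $\sigma(\mathcal{U})=\mathcal{U}[\sigma]$ and $\sigma(c)=c$. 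The only genuinely delicate check is that no solution in $l^n$ can fail to be of this form --- but every solution over $R$ is, because $\mathcal{U}$ is a fundamental matrix, and $l^n\subset R^n$.
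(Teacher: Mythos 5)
Your proof is correct and follows essentially the same route as the paper's: base change to $l$, the Picard--Vessiot identification of the solution space in $R^n$ with the representation $V$ so that solutions rational over the base field correspond to $G$-invariants (via $R^G=l$), and a descent step showing the dimension over the constants is unchanged under $k\to l$. The only cosmetic difference is that you carry out the descent by flatness of $k^\tau\to\overline{k^\tau}$ (kernels of the $k^\tau$-linear operator $w\mapsto\tau(w)-Aw$ commute with flat base change), whereas the paper invokes Galois descent via the $\mathrm{Gal}(l/k)$-action on the solution space over $l$; both are standard and yield the same identity $\dim_{k^\tau}(W)=\dim_{\overline{k^\tau}}(W_l)$.
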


\begin{proof}
The absolute (ordinary) Galois group ${\it Gal}(k^\tau)$ acts naturally on $l$,
and commutes with the action of $\tau$; in particular, we have a natural
isomorphism ${\it Gal}(k^\tau)={\it Gal}(l/k)$, and the latter is well-defined.  In
particular, if we extend coefficients of the difference equation to $l$,
the resulting vector space is stable under ${\it Gal}(l/k)$, and thus admits a
basis of $k$-rational vectors.  In other words, the dimension
$\dim_{k^\tau}(W)$ is unchanged under this coefficient extension, and we
may therefore assume $k=l$, or in other words that the field of constants
in $k$ is algebraically closed.

Now, let the $k$-algebra $R$ be a Picard-Vessiot ring of the difference
equation.  Then, by definition, the set of solutions $w\in R^n$ of the
difference equation is an $n$-dimensional vector space over $k^\tau$ with a
$G$ action equivalent to the representation $V$.  A solution has
coefficients in $k$ iff it is invariant under the action of $G$, and thus
the result follows.
\end{proof}

{\em Remark 2.}  A similar lemma holds for the differential case (replacing
$GL_n$ by its Lie algebra); note that in that case, the constant field is
automatically algebraically closed in the coefficient field.

{\em Remark 3.}  More generally, if $\rho$ is any rational representation of
$GL_n(k)$, one can apply the lemma to the difference equation
$\tau(w)=\rho(A) w$, using the fact that the new Galois group is simply
$\rho(G)$.

Applying remark 3 to the adjoint representation gives the following:

\begin{cor}
Let $A,G$ be as above, and let $W$ be the space of matrices $M\in {\it End}(k^n)$
such that $\tau(M)=A M A^{-1}$.  Then
\[
\dim_{k^\tau}(W) = \dim_l({\it End}_G(V)).
\]
In particular, if $G$ is irreducible, then $W$ is $1$-dimensional, and
(since $I\in W$) $W$ consists of scalar matrices.
\end{cor}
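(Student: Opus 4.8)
The plan is to deduce this Corollary directly from the preceding Lemma via the device of Remark 3, specializing the rational representation $\rho$ of $GL_n$ to the adjoint (conjugation) representation on $\mathrm{End}(k^n)$, and then to invoke Schur's lemma for the ``in particular'' clause.

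First I would take $\rho=\mathrm{Ad}$, the rational representation of $GL_n(k)$ on the $n^2$-dimensional space $\mathrm{End}(k^n)$ given by $\mathrm{Ad}(A)\colon M\mapsto AMA^{-1}$. The difference equation $\tau(M)=\mathrm{Ad}(A)\,M$ is precisely the equation $\tau(M)=AMA^{-1}$ defining $W$, so $W$ is its space of $k$-rational solutions. By Remark 3 the Galois group of this equation is $\mathrm{Ad}(G)$, and its associated representation is $\mathrm{End}(V)$ with $G$ acting by conjugation (i.e.\ $V\otimes V^{*}$ as a $G$-module). Applying the Lemma to this equation therefore gives
\[
\dim_{k^\tau}(W)=\dim_l\bigl(\mathrm{End}(V)^{G}\bigr),
\]
where $\mathrm{End}(V)^{G}$ denotes the conjugation-fixed points. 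It then remains only to observe that a matrix is fixed by conjugation by all $g\in G$ exactly when it commutes with every $g\in G$, so $\mathrm{End}(V)^{G}=\mathrm{End}_G(V)$, which yields the displayed equality.

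For the final assertion, recall that by the proof of the Lemma we may, after the base change $k\mapsto l$, assume the constant field is algebraically closed and that $V$ is a genuine $G$-representation over it. If $G$ acts irreducibly on $V$, Schur's lemma over an algebraically closed field forces $\mathrm{End}_G(V)$ to consist of scalar operators, hence to be $1$-dimensional. Finally, the identity matrix has constant entries and satisfies $\tau(I)=I=AIA^{-1}$, so $I\in W$; combined with $\dim_{k^\tau}(W)=1$ this shows $W=k^\tau\cdot I$, i.e.\ $W$ consists of scalar matrices.

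The only point requiring care is the bookkeeping around the base change: one must check that the reduction to algebraically closed constants used in the Lemma is the same one under which $V$ acquires its $G$-module structure, so that Schur's lemma is legitimately applied over that field and the identification $\mathrm{End}(V)^{G}=\mathrm{End}_G(V)$ takes place there. This is exactly what Remark 3 together with the proof of the Lemma provide, so no genuinely new difficulty arises; the remaining content is the routine $V\otimes V^{*}$ computation and Schur's lemma.
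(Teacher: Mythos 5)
Your proposal is correct and follows essentially the same route as the paper: the Corollary is obtained by applying Remark~3 with $\rho=\mathrm{Ad}$ so that the Lemma gives $\dim_{k^\tau}(W)=\dim_l(\mathrm{End}(V)^G)=\dim_l(\mathrm{End}_G(V))$, and the irreducible case is handled by Schur's lemma over the algebraically closed constant field $l$ together with the observation that $I\in W$. Your extra care about the base change is sound but adds nothing beyond what the paper's Lemma already establishes.
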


That ${\it End}_G(V)=l$ when $V$ is irreducible is a standard fact (Schur's
lemma) of representation theory. Any endomorphism which is not a
multiple of the identity has at least one proper eigenspace (since $l$
is algebraically closed), and each eigenspace is invariant, making the
representation reducible.

We can pass now to the proof of our transformation formula itself.
In this case the field $k$ coincides with the field of $p$-elliptic
functions in one of the parameters $x$ (actually, in all parameters),
and $\tau$ is the $q$-shift operator, $\tau(f(x))=f(qx)$. Therefore
$k^\tau$ is a field of constants independent on $x$ (the only
simultaneously $p$- and $q$-elliptic set of  $x$-functions). In
order to apply the above theory, we need to show three things: first, that
our two recurrences can be combined to give a difference equation in matrix
form as above, second, that the coefficients of this difference equation can
be made elliptic upon suitable renormalization, and third, that the
resulting elliptic difference equation has irreducible Galois group.

\begin{proof}
To see that the recurrences combine to give a matrix difference equation of
order $\binom{n+m}{m}$, we need to show that one can use the recurrences to
express
\[
I^{(m)}_n(qt_1,\dots,qt_m,t_{m+1},\dots,qt_{2m+2n+3},t_{2m+2n+4}/q;p,q)
\]
as a linear combination of
\[
\prod_{i\in S} T_q(t_i)
I^{(m)}_n(t_1,\dots,t_{2m+2n+4};p,q),
\]
where $S$ ranges over $m$-element subsets of $\{1,2,\dots,n+m\}$.
Using the $n+2$-term recurrence, we obtain a linear dependence between the
original integral, the integral
\[
I^{(m)}_n(qt_1,\dots,qt_m,t_{m+1},\dots,t_{2m+2n+3},t_{2m+2n+4};p,q),
\]
and the $n$ integrals
\[
T_q(t_i)
I^{(m)}_n(qt_1,\dots,qt_m,t_{m+1},\dots,t_{2m+2n+3},t_{2m+2n+4}/q;p,q)
\]
for $m+1\le i\le m+n$.  By symmetry, it suffices to consider the term with
$i=m+1$.  But then the $m+2$ term recurrence gives a linear dependence
between the $m+2$ integrals
\[
T_q(t_i)^{-1}
I^{(m)}_n(qt_1,\dots,qt_{m+1},\dots,t_{2m+2n+3},t_{2m+2n+4};p,q)
\]
for $i\in \{1,\dots,m+1,2m+2n+4\}$.  We thus obtain a matrix difference
equation of the form required.  (One also notes that the corresponding
matrix $A$ is quite sparse; the entry corresponding to a pair $S$, $T$ of
$m$-subsets of $\{1,2,\dots,n+m\}$ is 0 unless $S\cap T\ge m-1$.)

Next, for ellipticity, we consider the renormalization
\[
\left(
\frac{\eg(v^{\pm 2})}
     {\prod_{1\le r\le 2m+2n+4} \eg(t_r v^{\pm 1})}
\right)^n
I^{(m)}_n(t_1,\dots,t_{2m+2n+4};p,q).
\]
This differs from the corresponding minor of the matrix for $I^{(m+n-1)}_1$
by multiplication by a pair of factors, one of which is $p$-elliptic in all
variables other than $v$, and the other of which is similarly
$q$-elliptic.  We thus find that the matrix $A$ is essentially just the
$n$-th exterior power of the corresponding matrix for $I^{(m+n-1)}_1$, up
to a pair of diagonal matrices that combined have no effect on
ellipticity.  It will thus suffice to show that the difference equation is
elliptic when $n=1$.

Consider the $m+1\times m+1$ matrix
\begin{align}
M(x)_{ij}
:=
T_p(t_i)^{-1}T_q(t_j)^{-1}&
\frac{\eg(v^{\pm 2})
      \prod_{1\le r\le m+1} \eg(v^{\pm 1}/t_r)}
     {\eg(x v^{\pm 1},v^{\pm 1}/Tx)
      \prod_{m+2\le r\le 2m+4} \eg(t_r v^{\pm 1})}\notag\\
&\times I^{(m)}_1(pqt_1,\dots,pqt_{m+1},t_{m+2},\dots,t_{2m+4},x,1/Tx;p,q),\notag
\end{align}
where $T=\prod_{1\le r\le 2m+4}t_r$.  This is symmetrical between $p$ and $q$
(being replaced by its transpose when $p$ and $q$ are swapped), so it
suffices to consider its behavior under a $q$ shift.  One finds, in fact
(using the two recurrences as described above), that
\[
M(qx) = A(x)M(x),
\]
where
\begin{align}
A(x)_{ij}
&{}=
\delta_{ij}
\frac{\theta_p(xt_i^{\pm 1},Tx v^{\pm 1})}
     {\theta_p(xv^{\pm 1},Tx t_i^{\pm 1})}
+
\frac{\theta_p(t_i v^{\pm 1},Tx v^{\pm 1})}
     {\theta_p(t_i (Tx)^{\pm 1},xv^{\pm 1}))}
\notag\\
&{} \times
\frac{\theta_p(Tx^2)\prod_{1\le l\le m+1}\theta_p(t_l Tx)}
{\prod_{m+2\le l\le 2m+4} \theta_p(Tx/t_l)}
\frac{\theta_p(t_j x)\prod_{m+2\le l\le 2m+4} \theta_p(1/t_lt_j)}
     {\theta_p(t_j Tx,v^{\pm 1}/t_j)\prod_{1\le l\le m+1;l\ne j}\theta_p(t_l/t_j)}\notag
\end{align}
These coefficients are readily verified to be elliptic as required.

It remains only to prove that the Galois group is (generically) irreducible
for all $m$, $n$.  Now, the Galois group for $n>1$ is the $n$-th exterior
power of the group for $n=1$, so it will suffice to show that the generic
group for $n=1$ contains $SL_{m+1}$ (all nonzero exterior powers of which
are irreducible).  We can proceed as in the proof of Theorem 3.3.3.1 in
\cite{andre}.  The point is that, by Andr\'e's theory, the Galois
group can only become smaller under specialization, including degeneration
to a differential equation.  We have already discussed the fact that the integral
$I^{(m)}_1$ can be degenerated to a higher-order classical beta integral,
and indeed one can obtain a basis of the corresponding differential
equation in that way.  It thus follows that the elliptic hypergeometric
difference equation degenerates under that limit to the Jordan-Pochhammer
differential equation.  But this is known \cite{Takano/Bannai} to have
generic Galois group containing $SL_{m+1}$.
\end{proof}

{\em Remark 4.}  Note, in particular, that the given formula for $A$ indeed
converges to the identity matrix in the Jordan-Pochhammer limit, as in
particular $T\to 1$, making the diagonal contribution converge to 1; the
off-diagonal contribution vanishes since $t_jt_{j+m+1}\to 1$.  One can
moreover directly compute the limiting differential equation, and use the
rigidity of the Jordan-Pochhammer equation to verify that the two equations
are equivalent.

\medskip

E.M.R. is supported in part by the National Science Foundation, grant
DMS0401387. V.S. is supported in part by the Russian Foundation for
Basic Research (RFBR), grant  08-01-00392, and by the Max Planck Institute for
Mathematics (Bonn) during the visit of which a part of this work was done.


\begin{thebibliography}}
\newcommand{\eb}{\end{thebibliography}

\begin{thebibliography}{000000}

\bibitem{andre} Y. Andr\'e, {\em Diff\'erentielles non commutatives et
th\'eorie de Galois diff\'erentielle ou aux diff'erences},
Ann. Scient. Ec. Norm. Sup. {\bf 34} (2001), 685--739.

\bibitem{aar}
G. E. Andrews, R. Askey, and R. Roy,
{\em Special Functions}, Encyclopedia of Math. Appl.
{\bf 71}, Cambridge Univ. Press, Cambridge, 1999.

\bibitem{ai3} K. Aomoto and M. Ito, {\em $BC_n$ type Jackson integral
  generalized from Gustafson's $C_n$ type sum}, J. Difference Eqs. Appl.,
to appear.

\bibitem
{die-spi:selberg} J. F. van Diejen and V. P. Spiridonov,
{\em Elliptic Selberg integrals},
Internat. Math. Res. Notices, no. 20 (2001), 1083--1110.

\bibitem{Dixon}
A.~L. Dixon, {\em On a generalisation of {L}egendre's formula
  {$KE'-(K-E)K'=\frac{1}{2}\pi$}},
Proc. London Math. Soc. (2)  {\bf 3} (1905), 206--224.

\bibitem{Etingof} P.~I.~Etingof, {\em Difference equations with elliptic
coefficients and quantum affine algebras}, arXiv:hep-th/9312057.

\bibitem
{kra:advanced} C. Krattenthaler, {\em Advanced determinant calculus:
a complement}, Linear Algebra Appl. {\bf 411} (2005), 68--166.

\bibitem{vanderPut/Singer}
M. van der Put and M. F. Singer,
{\em Galois theory of difference equations}.
Lecture Notes in Mathematics, 1666. Springer-Verlag, Berlin, 1997.

\bibitem
{rai:rec} E. M. Rains,
{\em Recurrences for elliptic hypergeometric integrals},
arXiv:math.CA/0504285.

\bibitem
{rai:trans} E. M. Rains, {\it Transformations of elliptic
hypergeometric integrals}, Ann. of Math., to appear.

\bibitem
{rai:limits}
E. M. Rains, {\it Limits of elliptic hypergeometric integrals},
Ramanujan J., to appear.

\bibitem
{rich} D. Richards and Q. Zheng, {\em Determinants of period matrices and
an application to Selberg's multidimensional beta integral},
Adv. in Appl. Math. {\bf 28} (2002), 602--633.

\bibitem
{spi:beta} V. P. Spiridonov,
{\em On the elliptic beta function},
Russ. Math. Surveys {\bf 56} (1) (2001), 185--186.

\bibitem
{spi:theta} V. P. Spiridonov,
{\em Theta hypergeometric integrals},
Algebra i Analiz \textbf{15} (6) (2003), 161--215.

\bibitem
{spi:thesis} V. P. Spiridonov,
{\em Elliptic hypergeometric functions},
Habilitation thesis (JINR, September 2004), 218 pp.;
{\em Elliptic hypergeometric functions and
Calogero-Sutherland type models}, Teor. Mat. Fiz. {\bf 150} (2) (2007),
311--324.

\bibitem
{spi:short}  V. P. Spiridonov,
{\em Short proofs of the elliptic beta integrals},
Ramanujan J. {\bf 13} (2007), 265--283.

\bibitem
{spi-war:inversions} V. P. Spiridonov and S. O. Warnaar,
{\em Inversions of integral operators and elliptic beta integrals
on root systems}, Adv. in Math. {\bf 207} (1) (2006), 91--132.

\bibitem
{Takano/Bannai}
K. Takano and E. Bannai,
{\em A global study of Jordan-Pochhammer differential equations},
Funkcial. Ekvac. {\bf 19} (1976), no. 1, 85--99.

\bibitem
{var1} A. N. Varchenko, {\em The Euler beta-function, the Vandermonde determinant,
the Legendre equation, and critical values of linear functions on a configuration
of hyperplanes}, I \& II, Izv. Akad. Nauk SSSR Ser. Mat. {\bf 53} (1989),
1206--1235 \& {\bf 54} (1990), 146--158.

\end{thebibliography}
\end{document}